\theoremstyle{plain}
\newtheorem{thm}{\protect\theoremname}[section]
  \theoremstyle{definition}
  \newtheorem{defn}[thm]{\protect\definitionname}
  \theoremstyle{remark}
  \newtheorem{rem}[thm]{\protect\remarkname}
  \theoremstyle{definition}
  \newtheorem{example}[thm]{\protect\examplename}
  \theoremstyle{plain}
  \newtheorem{lem}[thm]{\protect\lemmaname}
\newenvironment{keywords}{ \noindent\footnotesize\textbf{Keywords and phrases:}}{}
\newenvironment{class}{\noindent\footnotesize\textbf{Mathematics subject classification 2010:}}{}
\newcommand*{\trace}{\operatorname{trace}}
\newcommand*{\dive}{\operatorname{div}}
\newcommand*{\Grad}{\operatorname{Grad}}
\newcommand*{\Dive}{\operatorname{Div}}
\newcommand*{\grad}{\operatorname{grad}}
\renewcommand*{\i}{\mathrm{i}}
\DeclareMathAccent{\Circ}{\mathalpha}{operators}{"17}
\newcommand{\interior}[1]{\Circ{#1}}
\renewcommand{\Re}{\operatorname{\mathfrak{Re}}}
\renewcommand{\hat}{\widehat}
\renewcommand{\tilde}{\widetilde}
\renewcommand*{\epsilon}{\varepsilon}
\renewcommand*{\rho}{\varrho}
\author{Rainer Picard, Sascha Trostorff \& Marcus Waurick}
  \providecommand{\definitionname}{Definition}
  \providecommand{\examplename}{Example}
  \providecommand{\lemmaname}{Lemma}
  \providecommand{\remarkname}{Remark}
\providecommand{\theoremname}{Theorem}
\begin{document}
\makepreprinttitlepage

\author{ Rainer Picard, \\ Sascha Trostorff, \\ Marcus Waurick \\ Institut f\"ur Analysis, Fachrichtung Mathematik\\ Technische Universit\"at Dresden\\ Germany\\ rainer.picard@tu-dresden.de\\ sascha.trostorff@tu-dresden.de\\ marcus.waurick@tu-dresden.de }

\title{On a Class of Boundary Control Problems.}

\maketitle
\begin{abstract} \textbf{Abstract.} We discuss a class of linear
control problems in a Hilbert space setting, which covers diverse
systems such as hyperbolic and parabolic equations with boundary control
and boundary observation even including memory terms. We introduce
abstract boundary data spaces in which the control and observation
equations can be formulated without strong geometric constraints on
the underlying domain. The results are applied to a boundary control
problem for the equations of visco-elasticity. \end{abstract}

\begin{keywords} linear control systems, well-posedness, evolutionary
equations, memory \end{keywords}

\begin{class} 93C05 (Linear systems), 93C20 (Systems governed by
partial differential equations), 93C25 (Systems in abstract spaces)
 \end{class}

\newpage

\tableofcontents{} 

\newpage

\section{Introduction}

Linear Control systems are typically given by a differential equation,
linking the state $x$ and the control $u$ 
\[
\dot{x}(t)=Ax(t)+Bu(t),\quad t\in\mathbb{R}_{>0},
\]

usually completed by an initial condition $x(0+)=x_{0},$ and an algebraic
equation linking state, control and the observation $y$

\[
y(t)=Cx(t)+Du(t)\quad t\in\mathbb{R}_{>0},
\]

where $A,B,C$ and $D$ are matrices of appropriate sizes. Rewriting
these equations as one system acting on the whole real line $\mathbb{R}$
instead of the positive half-line $\mathbb{R}_{>0}$ we end up with
an differential-algebraic system of the form 
\[
\partial_{0}\left(\begin{array}{cc}
1 & 0\\
0 & 0
\end{array}\right)\left(\begin{array}{c}
x\\
y
\end{array}\right)+\left(\begin{array}{cc}
0 & 0\\
-C & 1
\end{array}\right)\left(\begin{array}{c}
x\\
y
\end{array}\right)+\left(\begin{array}{cc}
-A & 0\\
0 & 0
\end{array}\right)\left(\begin{array}{c}
x\\
y
\end{array}\right)=\delta\otimes\left(\begin{array}{c}
x_{0}\\
0
\end{array}\right)+\left(\begin{array}{c}
B\\
D
\end{array}\right)u,
\]

where $\partial_{0}$ denotes the derivative with respect to time
and the initial condition for the state variable $x$ transforms into
an additional Dirac-$\delta$-source term on the right hand side.
Systems of this form have been studied in the finite- and infinite-dimensional
case in various works. In the infinite-dimensional case the operators
$B,C$ and $D$, acting on some suitable Banach- or Hilbert-spaces,
are usually assumed to be bounded, while the operator $A$ is a generator
of a $C_{0}$-semigroup. In this case the solution theory is rather
straightforward. However, in case of boundary control and observation
it turns out that the operators $B$ and $C$ are in general unbounded
and hence, a more sophisticated theory is needed. The classical approach
is to consider so-called admissible operators $B$ and $C$ as it
was done for instance in \cite{Salamon1987,Salamon1989,Curtain1989,Engel1998,LasTrigg2000-1,LasTrigg2000-2,Weiss1989,Weiss1989_representation,Tucsnak_Weiss2003,Jacob2004}.
We focus on a class of linear control problems, where the operators
$B$ and $C$ are unbounded, but in our approach the admissibility
for these operators has not to be verified.\\
The solution theory provided in this article is based on the theory
of evolutionary equations as they were considered in \cite{Picard}.
As it was shown in \cite{Picard2012_comprehensive_control,Picard2012_conservative},
linear control systems (including the case of unbounded operators
$B$ and $C$) are just a subclass of evolutionary equations. In this
note we will generalize the solution theory presented in \cite{Picard2012_comprehensive_control}
to a broader class of so-called \emph{(linear) material laws}. This
generalization allows us to study control problems including delay
terms. As in \cite{Picard2012_comprehensive_control} we introduce
abstract boundary data spaces, which enable us to formulate boundary
control and observation equations without strong smoothness assumptions
on the boundary of the underlying domain. Indeed, it will suffice
to guarantee a Poincare-type estimate for the involved differential
operators.\\
Section \ref{sec:Functional-analytic-framework} recalls some preliminaries
on evolutionary equations, linear material laws and extrapolation
spaces (so-called Sobolev chains) and we refer to \cite{Picard,Picard_McGhee,Kalauch2011}
for the proofs and a deeper study of the related topics. \\
In Section \ref{sec:Control-Systems-as} we introduce the notion of
linear control systems, which will be a special case of the broader
class of abstract evolutionary equations. In contrast to \cite{Picard2012_comprehensive_control}
we will generalize the class of possible control problems to the case
of arbitrary material laws (while in \cite{Picard2012_comprehensive_control}
just the so-called (P)-degenerate case, cf. \cite{Picard}, was treated).
We provide a well-posedness result for this class, which is in essence
just an application of \cite[Solution theory]{Picard}, and show the
causality of the solution operator. \\
Boundary control problems are introduced in Section \ref{sec:Boundary-control}
and we show that they fit into the abstract class of linear control
problems introduced previously. In order to formulate boundary control
and observation equations, without imposing strong smoothness constraints
on the domain, we introduce abstract traces and recall the notion
of abstract boundary data spaces. Finally, we apply our findings to
a boundary control problem for the equations of visco-elasticity.

\section{Preliminaries\label{sec:Functional-analytic-framework}}

In this section we recall the notion of evolutionary equations. Following
\cite{Kalauch2011} we begin to introduce the time derivative $\partial_{0}$
as a boundedly invertible operator on an exponentially weighted $L_{2}$-space. 
\begin{defn}
For $\nu\in\mathbb{R}_{>0}$ we denote by $H_{\nu,0}(\mathbb{R})$
the space of all square-integrable functions%
\footnote{Throughout we identify the equivalence classes induced by the equality
almost everywhere with their representatives.%
} with respect to the exponentially weighted Lebesgue-measure $\exp(-2\nu t)\mbox{ d}t$,
endowed with the inner product given by 
\[
\langle f|g\rangle_{\nu,0}\coloneqq\intop_{\mathbb{\mathbb{R}}}f(t)^{\ast}g(t)\exp(-2\nu t)\mbox{ d}t\quad(f,g\in H_{\nu,0}(\mathbb{R})).
\]
We define the operator $\partial_{0,\nu}$ on $H_{\nu,0}(\mathbb{R})$
as the closure of the derivative 
\begin{align*}
\partial_{0,\nu}|_{C_{c}^{\infty}(\mathbb{R})}\colon C_{c}^{\infty}(\mathbb{R})\subseteq H_{\nu,0}(\mathbb{R}) & \to H_{\nu,0}(\mathbb{R})\\
\phi & \mapsto\phi'.
\end{align*}
This operator is normal with $\Re\partial_{0,\nu}=\nu$ and hence,
$0\in\rho(\partial_{0,\nu})$ with $\|\partial_{0,\nu}^{-1}\|\leq\frac{1}{\nu}$
. If the choice of $\nu$ is clear from the context we will omit the
additional index $\nu.$
\end{defn}
Clearly, we can extend the operator $\partial_{0}$ to the space of
$H$-valued functions $H_{\nu,0}(\mathbb{R};H)$, where $H$ is a
Hilbert space, in a canonical way. 
\begin{rem}
$\,$

\begin{enumerate}[(a)]

\item For $u\in H_{\nu,0}(\mathbb{R})$ the function $\partial_{0}^{-1}u$
is given by 
\[
\left(\partial_{0}^{-1}u\right)(x)=\intop_{-\infty}^{x}u(t)\,\mathrm{d}t\quad(x\in\mathbb{R}\mbox{ a.e.}).
\]
This especially yields the causality of $\partial_{0}^{-1}$.%
\footnote{A mapping $F:H_{\nu,0}(\mathbb{R};H)\to H_{\nu,0}(\mathbb{R};H)$,
where $H$ is an arbitrary Hilbert space, is called \emph{causal},
if\emph{ }for each $a\in\mathbb{R}$ it holds $\chi_{(-\infty,a]}(m)F\chi_{(-\infty,a]}(m)=\chi_{(-\infty,a]}(m)F,$
where by $\chi_{(-\infty,a]}(m)$ we denote the operator on $H_{\nu,0}(\mathbb{R};H)$
mapping a function $f$ to the truncated function $t\mapsto\chi_{(-\infty,a]}(t)f(t).$ %
} 

\item Let $\mathcal{F}:L_{2}(\mathbb{R})\to L_{2}(\mathbb{R})$ denote
the Fourier-transform and define for $\nu>0$ the operator $e^{-\nu m}:H_{\nu,0}(\mathbb{R})\to L_{2}(\mathbb{R})$
by $\left(e^{-\nu m}f\right)(x)=e^{-\nu x}f(x)$ for almost every
$x\in\mathbb{R},$ which is obviously unitary. Then we define the
\emph{Fourier-Laplace-transform }$\mathcal{L}_{\nu}\coloneqq\mathcal{F}e^{-\nu m}:H_{\nu,0}(\mathbb{R})\to L_{2}(\mathbb{R})$,
which gives the following spectral representation of the derivative
$\partial_{0,\nu}$:
\[
\partial_{0,\nu}=\mathcal{L}_{\nu}^{\ast}(\i m+\nu),
\]
where $m$ denotes the ``multiplication-by-the-argument'' operator
on $L_{2}(\mathbb{R})$ with maximal domain.

\end{enumerate}
\end{rem}
With the spectral representation of $\partial_{0}$ we can define
so-called linear material laws (cf. \cite{Picard}).
\begin{defn}
Let $r>0,\, H$ an arbitrary Hilbert space and $M:B_{\mathbb{C}}(r,r)\to L(H)$
be bounded and analytic. Then we define the operator $M\left(\frac{1}{\i m+\nu}\right)$
on $L_{2}(\mathbb{R})$ for $\nu>\frac{1}{2r}$ by 
\[
\left(M\left(\frac{1}{\i m+\nu}\right)f\right)(x)=M\left(\frac{1}{\i x+\nu}\right)f(x)\quad(x\in\mathbb{R}\mbox{ a.e.})
\]
and the \emph{linear material law} $M(\partial_{0}^{-1})\in L(H_{\nu,0}(\mathbb{R};H))$
for $\nu>\frac{1}{2r}$ by 
\[
M\left(\partial_{0}^{-1}\right)\coloneqq\mathcal{L}_{\nu}^{\ast}M\left(\frac{1}{\i m+\nu}\right)\mathcal{L}_{\nu}.
\]
\end{defn}
\begin{rem}
Due to the analyticity of $M$ we obtain by a Paley-Wiener result
that the operator $M(\partial_{0}^{-1})$ is causal.
\end{rem}
Note that any densely defined closed linear operator $A$ defined
in a Hilbert space $H$ gives rise to densely defined closed linear
operator in $H_{\nu,0}(\mathbb{R};H)$ defined as the canonical extension
of the operator acting as $(Af)(t)\coloneqq Af(t)$ for all $t\in\mathbb{R}$
and simple functions $f$ taking values in the domain of $A$. Henceforth,
we will identify $A$ with its extension without further notice.
\begin{thm}[{Solution theory for evolutionary equations \cite[Solution theory]{Picard}}]
\label{thm:sol_theory} Let $H$ be a Hilbert space and $A:D(A)\subseteq H\to H$
be skew-selfadjoint. Furthermore let $r>0$ and $M:B_{\mathbb{C}}(r,r)\to L(H)$
be analytic, bounded and assume that there exists $c>0$ such that
for all $z\in B_{\mathbb{C}}(r,r)$ 
\begin{equation}
\Re z^{-1}M(z)\geq c.\label{eq:pos_def}
\end{equation}
Then there exists $\nu_{0}>0$ such that for all $\nu\geq\nu_{0}$
the \emph{evolutionary equation 
\begin{equation}
\left(\overline{\partial_{0}M(\partial_{0}^{-1})+A}\right)u=f,\label{eq:evol_eq}
\end{equation}
}admits for every $f\in H_{\nu,0}(\mathbb{R};H)$ a unique solution
$u\in H_{\nu,0}(\mathbb{R};H),$ which depends continuously on $f.$
More precisely, $0\in\rho\left(\overline{\partial_{0}M(\partial_{0}^{-1})+A}\right)$
and the \emph{solution operator $\left(\overline{\partial_{0}M(\partial_{0}^{-1})+A}\right)^{-1}$}
is causal.
\end{thm}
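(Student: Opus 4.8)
The plan is to reduce the entire assertion to a single coercivity (strict positive-definiteness) estimate for the operator $\partial_0 M(\partial_0^{-1})+A$ on its natural domain $D(\partial_0)\cap D(A)$ (equivalently on a convenient core, such as the algebraic tensor product of $C_c^\infty(\mathbb{R})$ with $D(A)$), and then to read off bounded invertibility of its closure from this estimate together with the corresponding estimate for the adjoint. The decisive simplification is that $A$ is skew-selfadjoint, so $\Re\langle Au|u\rangle_{\nu,0}=0$ for all $u\in D(A)$, which means the operator $A$ contributes nothing to the real part and the whole estimate is carried by the material-law term.

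For that term I would use the spectral representation $\partial_{0,\nu}=\mathcal{L}_\nu^*(\i m+\nu)\mathcal{L}_\nu$ together with the definition of $M(\partial_0^{-1})$ to write
\[
\partial_0 M(\partial_0^{-1})=\mathcal{L}_\nu^*(\i m+\nu)M\!\left(\tfrac{1}{\i m+\nu}\right)\mathcal{L}_\nu,
\]
so that, putting $z=\frac{1}{\i x+\nu}$ and $\hat u=\mathcal{L}_\nu u$,
\[
\Re\langle \partial_0 M(\partial_0^{-1})u\,|\,u\rangle_{\nu,0}=\int_{\mathbb{R}}\Re\big\langle z^{-1}M(z)\,\hat u(x)\,\big|\,\hat u(x)\big\rangle\dd x .
\]
The key geometric point is that the inversion $\zeta\mapsto 1/\zeta$ carries the line $\{\i x+\nu:x\in\mathbb{R}\}$ onto the circle of radius $\frac{1}{2\nu}$ centred at $\frac{1}{2\nu}$, so for $\nu>\frac{1}{2r}$ every value $z=\frac{1}{\i x+\nu}$ lies in $B_{\mathbb{C}}(r,r)$; the hypothesis $\Re z^{-1}M(z)\geq c$ then applies pointwise under the integral and gives $\Re\langle\partial_0 M(\partial_0^{-1})u\,|\,u\rangle_{\nu,0}\geq c\|u\|_{\nu,0}^2$. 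Combining this with the skew-selfadjointness of $A$ yields, for $\nu_0:=\frac{1}{2r}$ and every $\nu>\nu_0$,
\[
\Re\big\langle(\partial_0 M(\partial_0^{-1})+A)u\,\big|\,u\big\rangle_{\nu,0}\geq c\,\|u\|_{\nu,0}^2,
\]
whence, by Cauchy--Schwarz, $\|(\partial_0 M(\partial_0^{-1})+A)u\|_{\nu,0}\geq c\|u\|_{\nu,0}$.

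From here invertibility follows by a standard closed-range argument. The lower bound passes to the closure $B:=\overline{\partial_0 M(\partial_0^{-1})+A}$ by continuity, so $B$ is injective with closed range. To obtain density of the range I would establish the analogous estimate for the adjoint: on the Fourier--Laplace side the adjoint of multiplication by $z^{-1}M(z)$ is multiplication by its pointwise adjoint, and since the selfadjoint part $\tfrac12(T+T^*)$ is unchanged under $T\mapsto T^*$, the real-part lower bound is inherited; together with $A^*=-A$ this gives $\Re\langle B^*v|v\rangle_{\nu,0}\geq c\|v\|_{\nu,0}^2$ and hence injectivity of $B^*$. Consequently $\overline{\operatorname{ran}B}=\ker(B^*)^{\perp}=H_{\nu,0}(\mathbb{R};H)$, and together with the closed range this shows $B$ is onto, i.e. $0\in\rho(B)$ with $\|B^{-1}\|_{\nu,0}\leq\frac{1}{c}$. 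This is precisely the asserted existence, uniqueness and continuous dependence.

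It remains to prove causality of $B^{-1}$, which I expect to be the main obstacle, since the global coercivity estimate by itself does not see the temporal ordering. The ingredients are in place: $\partial_0^{-1}$ is causal and $M(\partial_0^{-1})$ is causal by the Paley--Wiener remark. The plan is to pass to the integrated form of the equation by applying $\partial_0^{-1}$, giving $M(\partial_0^{-1})u+A\partial_0^{-1}u=\partial_0^{-1}f$, where I use that the time-independent operator $A$ commutes with $\partial_0^{-1}$. If $\chi_{(-\infty,a]}(m)f=0$, causality of $\partial_0^{-1}$ yields $\chi_{(-\infty,a]}(m)\partial_0^{-1}f=0$; commuting $\chi_{(-\infty,a]}(m)$ through $M(\partial_0^{-1})$, $\partial_0^{-1}$ and $A$ via their causality and spatial locality reduces the identity to one involving $\chi_{(-\infty,a]}(m)u$ alone, which by the coercivity estimate forces $\chi_{(-\infty,a]}(m)u=0$. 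The delicate step — the heart of the causality argument — is the rigorous justification of these commutations for the generally nonlocal material-law term, for which I would invoke causality of $M(\partial_0^{-1})$ rather than any pointwise-in-time identity, taking care that the truncated functions remain in the relevant domain. Alternatively, because all of the estimates above hold uniformly for $\nu\geq\nu_0$ and the resulting solution operators are consistent across $\nu$, causality can be deduced from the standard characterisation of causal operators via uniform boundedness of their $H_{\nu,0}$-realisations as $\nu\to\infty$.
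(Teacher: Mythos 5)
The paper does not prove this theorem at all: it is quoted from \cite{Picard} with the proof deferred to that reference, and your argument is essentially the standard one given there --- coercivity of $\partial_{0}M(\partial_{0}^{-1})$ via the Fourier--Laplace representation (your observation that $z=(\i x+\nu)^{-1}$ lies in $B_{\mathbb{C}}(r,r)$ exactly when $\nu>\frac{1}{2r}$ is correct and is the key geometric point), skew-selfadjointness of $A$ annihilating the real part, the matching estimate for the adjoint to get dense range, and a Paley--Wiener argument for causality. The one step you gloss that carries genuine content is the assertion $\Re\langle B^{*}v|v\rangle_{\nu,0}\geq c\|v\|_{\nu,0}^{2}$ for \emph{every} $v\in D(B^{*})$: observing that the formal adjoint $\partial_{0}^{*}M(\partial_{0}^{-1})^{*}-A$ satisfies the estimate on its own domain only shows that an operator \emph{contained in} $B^{*}$ is bounded below, which does not by itself rule out a nontrivial kernel of $B^{*}$; one must actually identify $B^{*}$ with the closure of that formal adjoint (or argue directly that $\operatorname{ran}(B)^{\perp}=\{0\}$), and this identification is where the cited reference does its technical work. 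For causality, your second route (bounded analyticity of $z\mapsto(z^{-1}M(z)+A)^{-1}$ on the ball, hence a Paley--Wiener theorem) is the one that closes cleanly; the truncation-and-commutation argument is genuinely problematic for the nonlocal material-law term, as you yourself note.
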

Next we introduce the concept of Sobolev-chains. For the proofs and
further details we refer to \cite[Chapter 2]{Picard_McGhee}.
\begin{defn}
Let $H$ be a Hilbert space and $C:D(C)\subseteq H\to H$ be a densely
defined, closed linear operator with $0\in\rho(C).$ For $k\in\mathbb{Z}$
we define $H_{k}(C)$ as the completion of the domain $D(C^{k})$
with respect to the norm $|C^{k}\cdot|_{H}.$ Then $\left(H_{k}(C)\right)_{k\in\mathbb{Z}}$
is a sequence of Hilbert spaces with $H_{k}(C)\hookrightarrow H_{k-1}(C)$
for $k\in\mathbb{Z}.$ The sequence $(H_{k}(C))_{k\in\mathbb{Z}}$
is called the \emph{Sobolev-chain of $C$. }For each $k\in\mathbb{Z}$
the operator $C:H_{|k|+1}(C)\subseteq H_{k+1}(C)\to H_{k}(C)$ possesses
a unitary extension to $H_{k+1}(C)$, which will be again denoted
by $C$. Furthermore $H_{k}(C)^{\ast}$ can be identified with $H_{-k}(C^{\ast})$
for each $k\in\mathbb{Z}$ via a unitary operator.\end{defn}
\begin{rem}
$\,$

\begin{enumerate}[(a)]

\item Let $H_{0},H_{1}$ be two Hilbert spaces over the same field
and $A:D(A)\subseteq H_{0}\to H_{1}$ be densely defined, closed and
linear. For each $k\in\mathbb{Z}$ the operator
\[
A:H_{|k|+1}(|A|+\i)\subseteq H_{k+1}(|A|+\i)\to H_{k}(|A^{\ast}|+\i)
\]
has a unique continuous extension to $H_{k+1}(|A|+\i).$

\item Let $\left(H_{k}(C)\right)_{k\in\mathbb{Z}}$ be a Sobolev-chain
associated to some operator $C$ and $A:H_{1}(C)\to H$ be linear
and bounded, where $H$ denotes an arbitrary Hilbert-space. Then the
operator $A^{\diamond}:H\to H_{-1}(C^{\ast})$ is defined as the dual
operator of $A$, where we identify the dual of $H$ with $H$ and
$H_{1}(C)^{\ast}$ is identified with $H_{-1}(C^{\ast}).$

\end{enumerate}
\end{rem}

\section{Abstract linear Control Systems\label{sec:Control-Systems-as} }

In this section we introduce the shape of linear control systems and
show that they fit into the class of evolutionary equations introduced
in the previous section. We consider a densely defined closed linear
operator $F:D(F)\subseteq H_{0}\to H_{1}$ for two Hilbert spaces
$H_{0}$ and $H_{1}$. Furthermore let $U$ and $Y$ be Hilbert spaces,
which will serve as control and observation space, respectively.
\begin{defn}
Let $M_{1,i2}\in L(Y;H_{i})$ and $M_{1,2i}\in L(H_{i};Y)$ for $i\in\{0,1\}$
and $M_{1,22}\in L(Y;Y)$. Let 
\begin{equation}
M(z)\coloneqq\left(\begin{array}{cc}
K(z) & \left(\begin{array}{c}
0\\
0
\end{array}\right)\\
\left(\begin{array}{cc}
0 & 0\end{array}\right) & 0
\end{array}\right)+z\left(\begin{array}{cc}
0 & \left(\begin{array}{c}
M_{1,02}\\
M_{1,12}
\end{array}\right)\\
\left(\begin{array}{cc}
M_{1,20} & M_{1,21}\end{array}\right) & M_{1,22}
\end{array}\right)\quad(z\in B_{\mathbb{C}}(r,r)),\label{eq:material_law}
\end{equation}
where $K:B_{\mathbb{C}}(r,r)\to L(H_{0}\oplus H_{1})$ is a linear
material law. An\emph{ abstract} \emph{linear control system }$\mathcal{C}_{M,F,B}$
is an evolutionary equation of the form\emph{ 
\[
\left(\partial_{0}M(\partial_{0}^{-1})+\left(\begin{array}{ccc}
0 & F^{\ast} & 0\\
-F & 0 & 0\\
0 & 0 & 0
\end{array}\right)\right)\left(\begin{array}{c}
x\\
\xi\\
y
\end{array}\right)=f+Bu.
\]
}Here $f\in H_{\nu,-\infty}(\mathbb{R};H_{0}\oplus H_{1}\oplus Y)$
is an arbitrary source term, $u\in H_{\nu,0}(\mathbb{R};U)$ is the
control and $B\in L(H_{\nu,0}(\mathbb{R};U);H_{\nu,0}(\mathbb{R};(H_{0}\oplus H_{1}\oplus Y)))$
is the control operator. We call an abstract linear control system
\emph{well-posed,} if the operator 
\[
\partial_{0}M(\partial_{0}^{-1})+\left(\begin{array}{ccc}
0 & F^{\ast} & 0\\
-F & 0 & 0\\
0 & 0 & 0
\end{array}\right)\subseteq H_{\nu,0}(\mathbb{R};H_{0}\oplus H_{1}\oplus Y)\oplus H_{\nu,0}(\mathbb{R};H_{0}\oplus H_{1}\oplus Y)
\]
possesses a densely defined bounded inverse for sufficiently large
$\nu.$ The continuation to the space $H_{\nu,0}(\mathbb{R};H_{0}\oplus H_{1}\oplus Y)$
of the inverse is called \emph{solution operator}.
\end{defn}
It is clear that an abstract linear control system $\mathcal{C}_{M,F,B}$
is of the form \prettyref{eq:evol_eq} given in Theorem \ref{thm:sol_theory}
with $A=\left(\begin{array}{ccc}
0 & F^{\ast} & 0\\
-F & 0 & 0\\
0 & 0 & 0
\end{array}\right)$. Hence, the solution theory for evolutionary equations is applicable.
It is obvious that if $M$ satisfies the condition (\ref{eq:pos_def}),
then so does $N$ and the operator $\Re M_{1,22}$ is strictly positive
definite. However, the latter is not a sufficient condition for the
positive definiteness of $\Re z^{-1}M(z).$ We define the operator
$J\in L(Y;H_{0}\oplus H_{1})$ by 
\[
J\coloneqq\frac{1}{2}\left(\begin{array}{c}
M_{1,02}+M_{1,20}^{\ast}\\
M_{1,12}+M_{1,21}^{\ast}
\end{array}\right).
\]

\begin{thm}
\label{thm:well_posedness_control}Let $\mathcal{C}_{M,F,B}$ be an
abstract linear control system. Assume that $\Re z^{-1}N(z)\geq c_{0}>0$
and $\Re M_{1,22}\geq c_{1}>0$. Assume that there is $\delta>0$
such that $c_{0}-\delta\|J\|>0$ and $c_{1}-\frac{1}{\delta}\|J\|>0.$
Then $\mathcal{C}_{M,F,B}$ is well-posed and the solution operator
is causal.\end{thm}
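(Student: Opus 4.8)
The plan is to deduce the entire statement from the solution theory of Theorem~\ref{thm:sol_theory}. An abstract linear control system is of the form (\ref{eq:evol_eq}) with $A=\left(\begin{array}{ccc}0 & F^{\ast} & 0\\-F & 0 & 0\\0 & 0 & 0\end{array}\right)$ and material law $M$ as in (\ref{eq:material_law}), so the assertion follows once the two hypotheses of that theorem are checked for the full problem: that $A$ is skew-selfadjoint and that $M$ satisfies the positivity condition (\ref{eq:pos_def}). The first is immediate, since $F$ is densely defined and closed, whence $F^{\ast\ast}=F$ and the block operator above obeys $A^{\ast}=-A$. That $M$ is analytic and bounded on $B_{\mathbb{C}}(r,r)$ is also clear, because $K$ is a material law and the second summand in (\ref{eq:material_law}) is linear in $z$ with bounded operator coefficients. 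Thus the real weight of the proof lies in verifying (\ref{eq:pos_def}) for $M$.

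To that end I would form $z^{-1}M(z)$ and take its self-adjoint part. With respect to the orthogonal decomposition $\left(H_{0}\oplus H_{1}\right)\oplus Y$ this yields the block form
\[
\Re z^{-1}M(z)=\begin{pmatrix}\Re z^{-1}N(z) & J\\ J^{\ast} & \Re M_{1,22}\end{pmatrix},
\]
where the diagonal block $\Re z^{-1}N(z)$ is the one attached to the interior unknowns $(x,\xi)$ (governed by $K$) and satisfies $\Re z^{-1}N(z)\geq c_{0}$ by hypothesis, the lower diagonal block satisfies $\Re M_{1,22}\geq c_{1}$, and the off-diagonal entry is exactly the operator $J$ introduced before the theorem. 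This last identification is the very reason for that definition; note moreover that $J$ is $z$-independent, so $\|J\|$ is a fixed constant and all subsequent bounds are uniform in $z$.

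The key step is then a Young-inequality estimate turning this block-diagonal coercivity, together with smallness of the coupling $J$, into genuine positivity of the whole operator. For $\phi\in H_{0}\oplus H_{1}$ and $\psi\in Y$ one has
\[
\langle(\phi,\psi),\Re z^{-1}M(z)\,(\phi,\psi)\rangle=\langle\phi,\Re z^{-1}N(z)\phi\rangle+2\Re\langle\phi,J\psi\rangle+\langle\psi,\Re M_{1,22}\psi\rangle,
\]
and, estimating the cross term by $2\Re\langle\phi,J\psi\rangle\geq-2\|J\|\,|\phi|\,|\psi|\geq-\delta\|J\|\,|\phi|^{2}-\tfrac{1}{\delta}\|J\|\,|\psi|^{2}$, one arrives at
\[
\langle(\phi,\psi),\Re z^{-1}M(z)\,(\phi,\psi)\rangle\geq\left(c_{0}-\delta\|J\|\right)|\phi|^{2}+\left(c_{1}-\tfrac{1}{\delta}\|J\|\right)|\psi|^{2}\geq c\left(|\phi|^{2}+|\psi|^{2}\right),
\]
with $c\coloneqq\min\{c_{0}-\delta\|J\|,\,c_{1}-\tfrac{1}{\delta}\|J\|\}>0$ by the standing assumption on $\delta$. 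Since this bound holds uniformly for $z\in B_{\mathbb{C}}(r,r)$, condition (\ref{eq:pos_def}) is established for $M$ with constant $c$.

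With both hypotheses of Theorem~\ref{thm:sol_theory} in force, the theorem gives $0\in\rho\bigl(\overline{\partial_{0}M(\partial_{0}^{-1})+A}\bigr)$, a bounded everywhere-defined inverse, and causality of the solution operator; restricting this inverse to the (dense) range of the non-closed operator yields well-posedness exactly in the sense of the definition, the source $f+Bu$ entering merely as a particular right-hand side. I expect the main obstacle to be precisely the balancing carried out in the third paragraph: it is the single point where the structural relation between $c_{0}$, $c_{1}$, $\delta$ and $\|J\|$ is genuinely used, encoding that the coupling $J$ must be small relative to the diagonal coercivity. Everything else is either a direct citation of the solution theory or routine bookkeeping.
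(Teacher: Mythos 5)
Your proposal is correct and follows essentially the same route as the paper: identify the system as an evolutionary equation with skew-selfadjoint block operator $A$, write $\Re z^{-1}M(z)$ in block form with off-diagonal coupling $J$, absorb the cross term via the Young-type inequality $2\|J\|\,|\phi|\,|\psi|\leq\delta\|J\|\,|\phi|^{2}+\tfrac{1}{\delta}\|J\|\,|\psi|^{2}$, and invoke Theorem \ref{thm:sol_theory}. The paper's proof is exactly this estimate (stated more tersely), so no further comparison is needed.
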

\begin{proof}
Since,
\[
\Re\left(\begin{array}{cc}
0 & \left(\begin{array}{c}
M_{1,02}\\
M_{1,12}
\end{array}\right)\\
\left(\begin{array}{cc}
M_{1,20} & M_{1,21}\end{array}\right) & M_{1,22}
\end{array}\right)=\left(\begin{array}{cc}
0 & J\\
J^{\ast} & \Re M_{1,22}
\end{array}\right)
\]
we get for $w\coloneqq(x,\xi,y)\in H_{0}\oplus H_{1}\oplus Y$
\begin{align*}
\Re\langle z^{-1}M(z)w|w\rangle & =\Re\left\langle z^{-1}N(z)\left(\begin{array}{c}
x\\
\xi
\end{array}\right)\left|\left(\begin{array}{c}
x\\
\xi
\end{array}\right)\right.\right\rangle +\langle\Re M_{1,22}y|y\rangle+2\Re\left\langle Jy\left|\left(\begin{array}{c}
x\\
\xi
\end{array}\right)\right.\right\rangle \\
 & \geq c_{0}\left|\left(\begin{array}{c}
x\\
\xi
\end{array}\right)\right|^{2}+c_{1}|y|^{2}-2\|J\|\left|\left(\begin{array}{c}
x\\
\xi
\end{array}\right)\right||y|\\
 & \geq(c_{0}-\delta\|J\|)\left|\left(\begin{array}{c}
x\\
\xi
\end{array}\right)\right|^{2}+\left(c_{1}-\frac{1}{\delta}\|J\|\right)|y|^{2}.
\end{align*}
The assertion then follows by Theorem \ref{thm:sol_theory}.
\end{proof}

\section{Boundary Control Systems\label{sec:Boundary-control}}

This section is devoted to the study of boundary control systems.
At first we show how boundary control and observation equations can
be handled within the framework presented in the previous sections.
This will mainly be done by a particular choice for the unbounded
operator $F.$ As it was pointed out in \cite[Subsection 5.1]{Picard2012_comprehensive_control},
the resulting class of control systems can be interpreted as a generalization
of a subclass of port-Hamiltonian systems (cf. \cite{Jacob_Zwart2012})
to the higher dimensional case. Moreover, we recall the notion of
so-called boundary data spaces, introduced in \cite[Subsection 5.2]{Picard2012_comprehensive_control},
as well as abstract traces, which enable us to treat boundary values
as suitable distributions belonging to some extrapolation space. \\
First let us fix some notation. For Hilbert spaces $H_{0},\ldots,H_{n}$
we define for $i\in\{0,\ldots,n\}$ the operator 
\[
\pi_{H_{i}}:H_{0}\oplus\ldots\oplus H_{n}\to H_{i}
\]
as the orthogonal projection on $H_{i}.$ Note that then $\pi_{H_{i}}^{\ast}$
is the canonical embedding from $H_{i}$ to $H_{0}\oplus\ldots\oplus H_{n}.$
\\

We begin this section with an illustrative example.
\begin{example}
Let $\Omega\subseteq\mathbb{R}^{n}$ be an arbitrary domain and define
the operators $\interior\grad$ and $\interior\dive$ as the closures
of 
\begin{align*}
\grad|_{C_{c}^{\infty}(\Omega)}:C_{c}^{\infty}(\Omega)\subseteq L_{2}(\Omega) & \to L_{2}(\Omega)^{n}\\
\phi & \mapsto\left(\partial_{i}\phi\right)_{i\in\{1,\ldots,n\}}
\end{align*}
and 
\begin{align*}
\dive|_{C_{c}^{\infty}(\Omega)^{n}}:C_{c}^{\infty}(\Omega)^{n}\subseteq L_{2}(\Omega)^{n} & \to L_{2}(\Omega)\\
\left(\phi_{i}\right)_{i\in\{1,\ldots,n\}} & \mapsto\sum_{i=1}^{n}\partial_{i}\phi,
\end{align*}
respectively. These operators are formally skew-adjoint, i.e., $\interior\grad\subseteq-\left(\interior\dive\right)^{\ast}\eqqcolon\grad$
and $\interior\dive\subseteq-\left(\interior\grad\right)^{\ast}\eqqcolon\dive.$
Then, using the extrapolation spaces of the operators $|\dive|+\i$
and $|\grad|+\i$ we define the \emph{Dirichlet-trace }and the \emph{Neumann-trace}
by 
\begin{align*}
\gamma_{\grad}:H_{1}(|\grad|+\i) & \to H_{-1}(|\dive|+\i)\\
u & \mapsto\left(\grad-\interior\grad\right)u
\end{align*}
and 
\begin{align*}
\gamma_{\dive}:H_{1}(|\dive|+\i) & \to H_{-1}(|\grad|+\i)\\
\zeta & \mapsto\left(\dive-\interior\dive\right)\zeta,
\end{align*}
respectively. Note that in the case of a smooth boundary, the distributions
$\gamma_{\grad}u$ and $\gamma_{\dive}\zeta$ for $u\in H_{1}(|\grad|+\i)$
and $\zeta\in H_{1}(|\dive|+\i)$ are supported on $\partial\Omega.$
More precisely with the help of the divergence theorem, 
\[
\langle\gamma_{\grad}u|\zeta\rangle=\intop_{\partial\Omega}u^{\ast}\zeta\cdot n\mbox{ d}S=\langle u|\gamma_{\dive}\zeta\rangle,
\]
where $n$ denotes the unit outward normal and $S$ the surface measure
on $\partial\Omega$. Note that $\gamma_{\grad}u=0$ if and only if
$u\in D(\interior\grad)$ and $\gamma_{\dive}\zeta=0$ if and only
if $\zeta\in D(\interior\dive).$
\end{example}
In the rest of this subsection we generalize the concepts illustrated
in the example above. For that purpose let $H_{0}$ and $H_{1}$ be
two complex Hilbert spaces, $\interior G\subseteq H_{0}\oplus H_{1}$
and $\interior D\subseteq H_{1}\oplus H_{0}$ two densely defined
closed linear operators, which are formally skew-adjoint. We define
$G\coloneqq-\left(\interior D\right)^{\ast}$ and $D\coloneqq-\left(\interior G\right)^{\ast}.$
\begin{defn}[Abstract traces]
 We define the \emph{abstract traces }$\gamma_{G}$ and $\gamma_{D}$
by 
\begin{align*}
\gamma_{G}:H_{1}(|G|+\i) & \to H_{-1}(|D|+\i)\\
v & \mapsto\left(G-\interior G\right)v
\end{align*}
and 
\begin{align*}
\gamma_{D}:H_{1}(|D|+\i) & \to H_{-1}(|G|+\i)\\
w & \mapsto\left(D-\interior D\right)w.
\end{align*}
Furthermore we define the \emph{abstract trace spaces }as the image
spaces of the respective trace operators, i.e. 
\begin{align*}
\mathrm{TR}(G) & \coloneqq\gamma_{G}[H_{1}(|G|+\i)],\\
\mathrm{TR}(D) & \coloneqq\gamma_{D}[H_{1}(|D|+\i)].
\end{align*}

\end{defn}
Clearly the kernels of $\gamma_{G}$ and $\gamma_{D}$ are given by
$H_{1}(|\interior G|+\i)$ and $H_{1}(|\interior D|+\i)$ respectively.
This leads to the following definition.
\begin{defn}[Boundary data spaces]
 We define the \emph{boundary data spaces} $\mathrm{BD}(G)$ and
$\mathrm{BD}(D)$ as 
\[
\mathrm{BD}(G)\coloneqq H_{1}(|\interior G|+\i)^{\bot_{H_{1}(|G|+\i)}}
\]
and 
\[
\mathrm{BD}(D)\coloneqq H_{1}(|\interior D|+\i)^{\bot_{H_{1}(|D|+\i)}}.
\]
\end{defn}
\begin{lem}
The boundary data spaces are given by $\mathrm{BD}(G)=[\{0\}](1-DG)$
and $\mathrm{BD}(D)=[\{0\}](1-GD).$\end{lem}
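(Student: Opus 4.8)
The plan is to reduce both claimed identities to the defining relation of the adjoint, after making the inner product of the relevant Sobolev space explicit; I treat $\mathrm{BD}(G)$, the case of $\mathrm{BD}(D)$ being symmetric. First I would observe that no genuine completion is involved in forming these spaces: since $G$ is closed, the norm $|(|G|+\i)\,\cdot\,|_{H_0}$ on $D(|G|+\i)=D(G)$ satisfies $|(|G|+\i)u|_{H_0}^2 = |Gu|_{H_1}^2 + |u|_{H_0}^2$, i.e.\ it is the graph norm of $G$, which is already complete. Thus $H_1(|G|+\i)$ is just $D(G)$ with the graph norm, and likewise $H_1(|\interior G|+\i)=D(\interior G)$, a closed subspace because $\interior G\subseteq G$. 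Expanding $\langle(|G|+\i)u\,|\,(|G|+\i)v\rangle_{H_0}$ and using selfadjointness of $|G|$ to cancel the two cross terms gives the inner product explicitly:
\[
\langle u \,|\, v\rangle_{H_1(|G|+\i)} = \langle Gu \,|\, Gv\rangle_{H_1} + \langle u \,|\, v\rangle_{H_0}.
\]

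Next I would translate the orthogonality condition defining $\mathrm{BD}(G)$. For $u\in D(G)$ one has $u\in\mathrm{BD}(G)$ precisely when $\langle \interior G\phi \,|\, Gu\rangle_{H_1} + \langle \phi \,|\, u\rangle_{H_0}=0$ for all $\phi\in D(\interior G)$, where I used $G\phi=\interior G\phi$ on $D(\interior G)$. This is exactly the statement that the functional $\phi\mapsto\langle\interior G\phi\,|\,Gu\rangle_{H_1}$ coincides with $\phi\mapsto\langle\phi\,|\,{-u}\rangle_{H_0}$; by the definition of the adjoint this holds if and only if $Gu\in D((\interior G)^{\ast})$ with $(\interior G)^{\ast}Gu=-u$. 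Since $D=-(\interior G)^{\ast}$, the latter reads $DGu=u$, i.e.\ $u\in[\{0\}](1-DG)$. The same chain of equivalences, run with $G$ replaced by $D$ and using $G=-(\interior D)^{\ast}$, yields $\mathrm{BD}(D)=[\{0\}](1-GD)$.

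The single step that needs care is the appeal to the adjoint: the orthogonality relation only asserts a priori that $\phi\mapsto\langle\interior G\phi\,|\,Gu\rangle_{H_1}$ agrees with the bounded functional $\phi\mapsto-\langle\phi\,|\,u\rangle_{H_0}$, and one must be sure this genuinely places $Gu$ in the domain $D((\interior G)^{\ast})$ rather than merely producing a formal identity. This is immediate from the definition of the adjoint of a densely defined operator, but it is precisely the point at which the unboundedness of $G$ and $D$ and the passage to the $H_{-1}$-formulation of the abstract traces are reconciled, so it is worth spelling out rather than treating as routine.
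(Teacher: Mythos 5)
Your proof is correct: identifying $H_{1}(|G|+\i)$ with $D(G)$ under the graph inner product $\langle Gu|Gv\rangle_{H_{1}}+\langle u|v\rangle_{H_{0}}$ and then reading the orthogonality to $D(\interior G)$ as the defining relation of $(\interior G)^{\ast}=-D$ is exactly the standard argument, and the step you flag (that the relation $\langle\interior G\phi|Gu\rangle=\langle\phi|-u\rangle$ for all $\phi\in D(\interior G)$ genuinely places $Gu$ in $D((\interior G)^{\ast})$) is indeed immediate from the definition of the adjoint of a densely defined operator. The paper itself only cites \cite[Lemma 5.1]{Picard2012_comprehensive_control} for this lemma, and your argument is the proof given there, so there is nothing to add.
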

\begin{proof}
See \cite[Lemma 5.1]{Picard2012_comprehensive_control}.\end{proof}
\begin{thm}
The operators 
\[
\gamma_{G}|_{\mathrm{BD}(G)}:\mathrm{BD}(G)\to\mathrm{TR}(G)
\]
and 
\[
\gamma_{D}|_{\mathrm{BD}(D)}:\mathrm{BD}(D)\to\mathrm{TR(D)}
\]
are unitary.\end{thm}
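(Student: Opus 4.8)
The plan is to prove the statement only for $\gamma_G|_{\mathrm{BD}(G)}$; the assertion for $\gamma_D|_{\mathrm{BD}(D)}$ then follows verbatim by interchanging the roles of $(\interior G,G)$ and $(\interior D,D)$. Since a surjective linear isometry between Hilbert spaces is automatically unitary, and since its range is then complete (so that $\mathrm{TR}(G)$, carrying the norm inherited from $H_{-1}(|D|+\i)$, is a genuine Hilbert space), it suffices to show that $\gamma_G|_{\mathrm{BD}(G)}$ is a norm-preserving bijection onto $\mathrm{TR}(G)$.

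For bijectivity I would invoke the orthogonal decomposition $H_1(|G|+\i)=H_1(|\interior G|+\i)\oplus\mathrm{BD}(G)$, which is just the definition of $\mathrm{BD}(G)$; here $H_1(|\interior G|+\i)$ is a closed subspace of $H_1(|G|+\i)$ because $\interior G\subseteq G$ is closed, so the graph-norm inclusion of $D(\interior G)$ into $D(G)$ has closed range. Together with the already recorded identity $\ker\gamma_G=H_1(|\interior G|+\i)$, this gives both injectivity of $\gamma_G$ on $\mathrm{BD}(G)$ and $\gamma_G[H_1(|G|+\i)]=\gamma_G[\mathrm{BD}(G)]$, i.e.\ surjectivity onto $\mathrm{TR}(G)$.

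The core of the argument is the isometry. First I would make $\gamma_G v$ explicit: reading $\gamma_G v=(G-\interior G)v$ as an element of $H_{-1}(|D|+\i)$, with $\interior G$ understood through its continuous extension $H_0\to H_{-1}(|D|+\i)$ furnished by the Sobolev-chain calculus (the dual of $(\interior G)^\ast=-D\colon H_1(|D|+\i)\to H_0$), one obtains for every $w\in H_1(|D|+\i)$ the integration-by-parts formula
\[
\langle\gamma_G v|w\rangle=\langle Gv|w\rangle_{H_1}+\langle v|Dw\rangle_{H_0},
\]
which I would verify on a core of $H_1(|D|+\i)$ and extend by continuity. Now let $v\in\mathrm{BD}(G)$. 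By the preceding Lemma, $\mathrm{BD}(G)=[\{0\}](1-DG)$, so $Gv\in D(D)$ and $DGv=v$. Substituting $v=DGv$ in the second summand and using the elementary identity $\langle u|w\rangle_{H_1(|D|+\i)}=\langle u|w\rangle_{H_1}+\langle Du|Dw\rangle_{H_0}$ (the cross terms cancel because $|D|$ is self-adjoint), the formula collapses to
\[
\langle\gamma_G v|w\rangle=\langle Gv|w\rangle_{H_1(|D|+\i)}\qquad(w\in H_1(|D|+\i)).
\]
Hence, under the Riesz identification $H_{-1}(|D|+\i)\cong H_1(|D|+\i)^{\ast}$, the functional $\gamma_G v$ is represented by the element $Gv\in H_1(|D|+\i)$, so that $|\gamma_G v|_{H_{-1}(|D|+\i)}=|Gv|_{H_1(|D|+\i)}$.

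It then only remains to compute $|Gv|_{H_1(|D|+\i)}^2=|Gv|_{H_1}^2+|DGv|_{H_0}^2=|Gv|_{H_1}^2+|v|_{H_0}^2=|v|_{H_1(|G|+\i)}^2$, the last step being the definition of the graph norm on $H_1(|G|+\i)$ together with $DGv=v$. This yields exactly $|\gamma_G v|_{H_{-1}(|D|+\i)}=|v|_{H_1(|G|+\i)}$, completing the isometry. I expect the main obstacle to lie not in the final computation, which is short, but in the careful bookkeeping of the extrapolation spaces: identifying $H_{-1}(|D|+\i)$ with $H_1(|D|+\i)^{\ast}$ over the correct pivot space $H_1$, justifying the continuous extension of $\interior G$ and the pairing formula above, and ensuring that the Riesz step is carried out in $H_1(|D|+\i)$ rather than in $H_1$. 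Once these identifications are in place, the collapse to $\langle Gv|w\rangle_{H_1(|D|+\i)}$ does all the work.
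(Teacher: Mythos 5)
Your proposal is correct and follows essentially the same route as the paper: the integration-by-parts formula $\langle\gamma_G v|w\rangle=\langle Gv|w\rangle+\langle v|Dw\rangle$, the substitution $v=DGv$ valid on $\mathrm{BD}(G)=[\{0\}](1-DG)$ to collapse the pairing to $\langle Gv|w\rangle_{H_1(|D|+\i)}$, and the computation $|Gv|_{H_1(|D|+\i)}=|v|_{H_1(|G|+\i)}$. Your single appeal to the Riesz representation is just a packaged version of the paper's two-step argument (Cauchy--Schwarz for one inequality, evaluation at $w=Gv$ for the other), so the proofs coincide in substance.
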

\begin{proof}
Let $u\in\mathrm{BD}(G).$ Then we get for each $v\in H_{1}(|D|+\i)$
that 
\begin{align*}
|\langle\gamma_{G}u|v\rangle| & =|\langle Gu|v\rangle+\langle u|Dv\rangle|\\
 & =|\langle Gu|v\rangle+\langle DGu|Dv\rangle|\\
 & =|\langle Gu|v\rangle_{H_{1}(|D|+\i)}|\\
 & \leq|Gu|_{H_{1}(|D|+\i)}|v|_{H_{1}(|D|+\i)}
\end{align*}
and hence, 
\[
|\gamma_{G}u|_{H_{-1}(|D|+\i)}\leq|Gu|_{H_{1}(|D|+\i)}=\sqrt{|Gu|^{2}+|DGu|^{2}}=\sqrt{|Gu|^{2}+|u|^{2}}=|u|_{H_{1}(|G|+\i)}.
\]
 On the other hand we have 
\[
\langle\gamma_{G}u|Gu\rangle=\langle Gu|Gu\rangle+\langle u|DGu\rangle=\langle Gu|Gu\rangle+\langle u|u\rangle=|u|_{H_{1}(|G|+\i)}^{2},
\]
which gives $|u|_{H_{1}(|G|+\i)}\leq|\gamma_{G}u|_{H_{-1}(|D|+\i)}.$
That $\gamma_{G}|_{\mathrm{BD}(G)}$ is onto, follows by the definition
of $\mathrm{BD}(G)$ and $\mathrm{TR}(G).$ The assertion for $\gamma_{D}|_{\mathrm{BD}(D)}$
follows by interchanging the roles of $D$ and $G$. 
\end{proof}
Since $G[\mathrm{BD}(G)]\subseteq\mathrm{BD}(D)$ and $D[\mathrm{BD}(D)]\subseteq\mathrm{BD}(G)$
we may consider the following restrictions of $G$ and $D$ 

\begin{align*}
\overset{\bullet}{D}:\mathrm{BD}\left(D\right) & \to\mathrm{BD}\left(G\right)\\
\phi & \mapsto D\phi
\end{align*}

and 
\begin{align*}
\overset{\bullet}{G}:\mathrm{BD}\left(G\right) & \to\mathrm{BD}\left(D\right)\\
\phi & \mapsto G\phi.
\end{align*}
The operators $\overset{\bullet}{D}$ and $\overset{\bullet}{G}$
enjoy the following surprising property.
\begin{thm}
\label{thm:unitary_D_G}We have that
\[
\left(\overset{\bullet}{G}\right)^{*}=\overset{\bullet}{D}=\left(\overset{\bullet}{G}\right)^{-1}.
\]
In particular, $\overset{\bullet}{G}$ and $\overset{\bullet}{D}$
are unitary.\end{thm}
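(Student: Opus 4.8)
The plan is to reduce the whole statement to the kernel description of the boundary data spaces furnished by the preceding Lemma, namely $\mathrm{BD}(G)=[\{0\}](1-DG)$ and $\mathrm{BD}(D)=[\{0\}](1-GD)$, together with the observation that the inner products on $\mathrm{BD}(G)$ and $\mathrm{BD}(D)$, inherited from $H_1(|G|+\i)$ and $H_1(|D|+\i)$, are the graph inner products $\langle u|v\rangle_{H_1(|G|+\i)}=\langle u|v\rangle+\langle Gu|Gv\rangle$ and $\langle\phi|\psi\rangle_{H_1(|D|+\i)}=\langle\phi|\psi\rangle+\langle D\phi|D\psi\rangle$ (this follows since $\big((|G|+\i)^{\ast}(|G|+\i)=G^{\ast}G+1$ and similarly for $D$). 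Once these are in place, every assertion becomes a one-line computation.

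First I would establish that $\overset{\bullet}{G}$ and $\overset{\bullet}{D}$ are mutually inverse. By the kernel description every $u\in\mathrm{BD}(G)$ satisfies $DGu=u$ and every $\phi\in\mathrm{BD}(D)$ satisfies $GD\phi=\phi$. In particular $Gu\in H_1(|D|+\i)$ with $GD(Gu)=G(DGu)=Gu$, so $Gu\in\mathrm{BD}(D)$; this reproves $G[\mathrm{BD}(G)]\subseteq\mathrm{BD}(D)$ and shows $\overset{\bullet}{G}$ is well-defined (and bounded, since $G$ maps $H_1(|G|+\i)$ boundedly into $H_1$), and symmetrically for $\overset{\bullet}{D}$. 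The identities $DGu=u$ and $GD\phi=\phi$ read precisely as $\overset{\bullet}{D}\,\overset{\bullet}{G}=\mathrm{id}_{\mathrm{BD}(G)}$ and $\overset{\bullet}{G}\,\overset{\bullet}{D}=\mathrm{id}_{\mathrm{BD}(D)}$, whence $\overset{\bullet}{D}=(\overset{\bullet}{G})^{-1}$.

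Next I would check that $\overset{\bullet}{G}$ is isometric. For $u\in\mathrm{BD}(G)$ the graph inner product on $H_1(|D|+\i)$ gives $|Gu|_{H_1(|D|+\i)}^{2}=|Gu|^{2}+|DGu|^{2}=|Gu|^{2}+|u|^{2}=|u|_{H_1(|G|+\i)}^{2}$, using $DGu=u$; by polarization this upgrades to $\langle Gu|Gv\rangle_{H_1(|D|+\i)}=\langle u|v\rangle_{H_1(|G|+\i)}$ for all $u,v\in\mathrm{BD}(G)$, so $\overset{\bullet}{G}$ preserves the inner product, and the same computation with $G$ and $D$ interchanged handles $\overset{\bullet}{D}$. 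Combining with the previous step, $\overset{\bullet}{G}$ is a surjective isometry between Hilbert spaces, hence unitary, and for a unitary operator the adjoint coincides with the inverse; therefore $(\overset{\bullet}{G})^{\ast}=(\overset{\bullet}{G})^{-1}=\overset{\bullet}{D}$. (If one prefers to avoid the polarization/unitarity shortcut, the adjoint relation may be verified directly: for $u\in\mathrm{BD}(G)$, $\phi\in\mathrm{BD}(D)$ one expands both graph inner products and uses $DGu=u$, $GD\phi=\phi$ to obtain $\langle Gu|\phi\rangle_{H_1(|D|+\i)}=\langle Gu|\phi\rangle+\langle u|D\phi\rangle=\langle u|D\phi\rangle_{H_1(|G|+\i)}$.)

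The computations are extremely short, so the substantive input is the cited Lemma. Accordingly, the only point demanding care is the bookkeeping around domains and inner products: one must be sure that for $u\in\mathrm{BD}(G)$ the vector $Gu$ genuinely lies in $H_1(|D|+\i)$, so that $DGu$ is meaningful, and that it returns $u$ upon applying $D$ — which is exactly the assertion $\mathrm{BD}(G)=[\{0\}](1-DG)$ (and dually for $D$). Once this is granted, no genuine obstacle remains, and the main work has already been done in the Lemma.
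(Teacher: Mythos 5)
Your proof is correct. The paper itself gives no argument for this theorem---it simply cites \cite[Theorem 5.2]{Picard2012_comprehensive_control}---but your route through the kernel characterization $\mathrm{BD}(G)=[\{0\}](1-DG)$, $\mathrm{BD}(D)=[\{0\}](1-GD)$ together with the graph inner products on $H_{1}(|G|+\i)$ and $H_{1}(|D|+\i)$ is exactly the intended one, and it matches the identities ($DGu=u$ for $u\in\mathrm{BD}(G)$, $|Gu|_{H_{1}(|D|+\i)}^{2}=|Gu|^{2}+|u|^{2}$) that the paper does use explicitly in its proof that $\gamma_{G}|_{\mathrm{BD}(G)}$ is unitary. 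The only cosmetic slip is the parenthetical claim that boundedness of $\overset{\bullet}{G}$ already follows from $G$ mapping $H_{1}(|G|+\i)$ boundedly into $H_{0}$; what is needed is boundedness into $\mathrm{BD}(D)$ equipped with the $H_{1}(|D|+\i)$ norm, but your isometry computation in the next step supplies precisely that, so nothing is missing.
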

\begin{proof}
See \cite[Theorem 5.2.]{Picard2012_comprehensive_control}.\end{proof}
\begin{rem}
The operator $\gamma_{D}|_{\mathrm{BD}(D)}\stackrel{\bullet}{G}\left(\gamma_{G}|_{\mathrm{BD}(G)}\right)^{-1}:\mathrm{TR}(G)\to\mathrm{TR}(D)$
is unitary and can be interpreted as an abstract version of the Dirichlet-to-Neumann
operator. 
\end{rem}

After these preparations we show, how systems with boundary control
and boundary observation can be treated within the framework of Section
3. For doing so, let $C\in L(H_{1}(|G|+\i);V)$ for some Hilbert space
$V$ and assume that $F$ is given by

\begin{equation}
F\coloneqq\left(\begin{array}{c}
-G\\
C
\end{array}\right):H_{1}(|G|+\i)\subseteq H_{0}(|G|+\i)\to H_{0}(|\interior D|+\i)\oplus V.\label{eq:op_F}
\end{equation}

As in the definition of an abstract linear control system $\mathcal{C}_{M,F,B}$
the adjoint of $F$ comes into play. We compute it explicitly in the
next theorem for the case, when $G$ is assumed to be boundedly invertible.
In applications this requirement can be guaranteed by assuming certain
geometric properties of the underlying domain (e.g. segment property,
Lipschitz boundary and so on).
\begin{thm}
\label{thm:adjoint_F}Let $F$ be given as above and let $G$ be boundedly
invertible. Then 
\begin{align*}
F^{\ast}:D(F^{\ast})\subseteq H_{0}(|\interior D|+\i)\oplus V & \to H_{0}(|G|+\i)\\
(\zeta,w) & \mapsto\interior D\zeta+C^{\diamond}w,
\end{align*}
where $C^{\diamond}$ is the dual operator of $C$ with respect to
the Gelfand-triplet $H_{1}(|G|+\i)\subseteq H_{0}(|G|+\i)\subseteq H_{-1}(|G|+\i)$
and 
\[
D(F^{\ast})=\{(\zeta,w)\in H_{0}(|\interior D|+\i)\oplus V\,|\,\interior D\zeta+C^{\diamond}w\in H_{0}(|G|+\i)\}.
\]
\end{thm}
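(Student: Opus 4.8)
The plan is to obtain $F^{\ast}$ directly from the defining relation of the Hilbert-space adjoint, reducing the whole computation to an abstract integration-by-parts identity within the Sobolev chain of $|G|+\i$. First I would check that $F$ is densely defined and closed, so that $F^{\ast}$ is a genuine single-valued operator. Density is clear, since $D(F)=D(G)=H_1(|G|+\i)$ is dense in $H_0$. For closedness, if $u_n\to u$ in $H_0$ with $Fu_n=(-Gu_n,Cu_n)\to(g,v)$, then closedness of $G$ gives $u\in D(G)$ and $Gu_n\to Gu=-g$, hence $u_n\to u$ in the graph norm, i.e.\ in $H_1(|G|+\i)$; boundedness of $C$ on $H_1(|G|+\i)$ then forces $v=Cu$. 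Consequently $(\zeta,w)\in D(F^{\ast})$ with $F^{\ast}(\zeta,w)=h\in H_0$ if and only if $\langle -Gu,\zeta\rangle+\langle Cu,w\rangle_V=\langle u,h\rangle_{H_0}$ for every $u\in H_1(|G|+\i)$.

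The heart of the matter is to rewrite the two left-hand terms as dualities over the Gelfand triple $H_1(|G|+\i)\subseteq H_0(|G|+\i)\subseteq H_{-1}(|G|+\i)$. For the control term this is just the definition of $C^{\diamond}$: one has $\langle Cu,w\rangle_V=\langle u,C^{\diamond}w\rangle$ with $C^{\diamond}w\in H_{-1}(|G|+\i)$. For the differential term I would invoke Remark (a): since $(\interior D)^{\ast}=-G$, the operator $\interior D$ extends to a bounded map $H_0(|\interior D|+\i)\to H_{-1}(|(\interior D)^{\ast}|+\i)=H_{-1}(|G|+\i)$, and I claim $\langle -Gu,\zeta\rangle=\langle u,\interior D\zeta\rangle$ for all $u\in H_1(|G|+\i)$ and all $\zeta\in H_1=H_0(|\interior D|+\i)$. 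One verifies this on the dense subspace $\zeta\in D(\interior D)=H_1(|\interior D|+\i)$, where it is the genuine adjoint identity $\langle -Gu,\zeta\rangle=\langle u,-G^{\ast}\zeta\rangle_{H_0}=\langle u,\interior D\zeta\rangle_{H_0}$ (using $\interior D=-G^{\ast}$), and then extends by continuity in $\zeta$, both sides being continuous functions of $\zeta$ on $H_1$. At this point I would also record the isometry $|(|G|+\i)\,\cdot\,|=|(|G|-\i)\,\cdot\,|$, which legitimizes identifying $H_1(|G|+\i)^{\ast}$ with $H_{-1}(|G|+\i)$ and makes the Gelfand triple consistent.

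Combining the two identities, the defining relation becomes $\langle u,\interior D\zeta+C^{\diamond}w\rangle=\langle u,h\rangle_{H_0}$ for all $u\in H_1(|G|+\i)$, with $\interior D\zeta+C^{\diamond}w\in H_{-1}(|G|+\i)$. By the standard pivot characterization in a Gelfand triple, a functional in $H_{-1}(|G|+\i)$ acts $H_0$-continuously on the dense subspace $H_1(|G|+\i)$ precisely when it already belongs to $H_0(|G|+\i)$, in which case it is represented by that very $H_0$-element. Hence such an $h$ exists if and only if $\interior D\zeta+C^{\diamond}w\in H_0(|G|+\i)$, and then $h=\interior D\zeta+C^{\diamond}w$. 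This delivers the asserted formula and the asserted domain simultaneously.

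The main obstacle is the bookkeeping in the second step: correctly identifying $\langle -Gu,\zeta\rangle$ with the action of the extrapolated operator $\interior D$ on $\zeta$, while keeping the two chains ($|G|+\i$ versus $|\interior D|+\i$), the conjugate-linear convention of the inner product, and the identification $H_1(|G|+\i)^{\ast}\cong H_{-1}(|G|+\i)$ all straight. I expect the hypothesis that $G$ is boundedly invertible to enter only for the clean identification of the spaces involved — for instance the description of the target space $H_0(|\interior D|+\i)\oplus V$ and the relevant norm equivalences — rather than in the algebraic mechanism that produces $F^{\ast}(\zeta,w)=\interior D\zeta+C^{\diamond}w$, which proceeds as above.
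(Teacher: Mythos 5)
Your proof is correct: it verifies closedness and dense definedness of $F$, converts the adjoint relation term by term into the $H_{-1}(|G|+\i)$--$H_{1}(|G|+\i)$ duality (the definition of $C^{\diamond}$ for the control part, the continuous extension of $\interior D$ to $H_{0}(|\interior D|+\i)$ plus a density argument for the differential part), and concludes with the standard pivot-space characterization; your side remark that the bounded invertibility of $G$ is not needed for this algebra but only for the subsequent reformulation of the domain condition (e.g.\ as $\gamma_{D}(\zeta+(\interior D)^{-1}C^{\diamond}w)=0$) is also accurate. The paper itself gives no proof here, deferring to \cite[Theorem 5.4]{Picard2012_comprehensive_control}, and your argument is the natural one along the lines of that reference, so there is nothing substantively different to compare.
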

\begin{proof}
See \cite[Theorem 5.4.]{Picard2012_comprehensive_control}.\end{proof}
\begin{rem}
Let $\mathcal{C}_{M,F,B}$ be an abstract linear control system, where
$F$ is given by (\ref{eq:op_F}). We assume that $G$ is boundedly
invertible. Note that, as a consequence, $\interior D$ is boundedly
invertible as well. An element $(x,(\zeta,w))\in H_{0}(|G|)\oplus\left(H_{0}(|\interior D|)\oplus V\right)$
belongs to the domain of $\left(\begin{array}{cc}
0 & -F^{\ast}\\
F & 0
\end{array}\right)$ if and only if $x\in H_{1}(|G|)$ and $\interior D\zeta+C^{\diamond}w\in H_{0}(|\interior D|).$
The latter is equivalent to 
\begin{equation}
\gamma_{D}(\zeta+(\interior D)^{-1}C^{\diamond}w)=0.\label{eq:bd_cd}
\end{equation}
Recall from \prettyref{eq:material_law} that $M$ is of the form
\[
M(z)=\left(\begin{array}{cc}
K(z) & \left(\begin{array}{c}
0\\
0
\end{array}\right)\\
\left(\begin{array}{cc}
0 & 0\end{array}\right) & 0
\end{array}\right)+z\left(\begin{array}{cc}
0 & \left(\begin{array}{c}
M_{1,02}\\
M_{1,12}
\end{array}\right)\\
\left(\begin{array}{cc}
M_{1,20} & M_{1,21}\end{array}\right) & M_{1,22}
\end{array}\right)
\]
for suitable operators $M_{1,ij}$ and $K:B_{\mathbb{C}}(r,r)\to L(H_{0}(|G|)\oplus H_{0}(|\interior D|)\oplus V).$
Note that due to the block structure of $F^{\ast}$, this operator
has indeed four lines and columns. With $(x,\xi)=(x,(\zeta,w))$,
the third and fourth line of the equation given by $\mathcal{C}_{M,F,B}$
read as 
\[
\partial_{0}\pi_{V}K(\partial_{0}^{-1})\left(\pi_{H_{0}(|G|)}^{\ast}x+\pi_{H_{0}(|\interior D|)}^{\ast}\zeta\right)+\partial_{0}\pi_{V}K(\partial_{0}^{-1})\pi_{V}^{\ast}w+\pi_{V}M_{1,12}y+Cx=\pi_{V}f+\pi_{V}Bu
\]
and 
\[
M_{1,20}x+M_{1,21}\pi_{H_{0}(|\interior D|)}^{\ast}\zeta+M_{1,21}\pi_{V}^{\ast}w+M_{1,22}y=\pi_{Y}f+\pi_{Y}Bu,
\]
respectively. We may rewrite this as 
\begin{align}
 & \left(\begin{array}{cc}
\partial_{0}\pi_{V}K(\partial_{0}^{-1})\pi_{V}^{\ast} & \pi_{V}M_{1,12}\\
M_{1,21}\pi_{V}^{\ast} & M_{1,22}
\end{array}\right)\left(\begin{array}{c}
w\\
y
\end{array}\right)\nonumber \\
 & =\left(\begin{array}{c}
\pi_{V}f+\pi_{V}Bu-\partial_{0}\pi_{V}K(\partial_{0}^{-1})(\pi_{H_{0}(|G|)}^{\ast}x+\pi_{H_{0}(|\interior D|)}^{\ast}\zeta)-Cx\\
\pi_{Y}f+\pi_{Y}Bu-M_{1,20}x-M_{1,21}\pi_{H_{0}(|\interior D|)}^{\ast}\zeta
\end{array}\right)\label{eq:control_eq}
\end{align}
or equivalently as 
\begin{align}
 & \left(\begin{array}{cc}
\partial_{0}\pi_{V}K(\partial_{0}^{-1})\pi_{V}^{\ast} & -\pi_{V}B\\
M_{1,21}\pi_{V}^{\ast} & -\pi_{Y}B
\end{array}\right)\left(\begin{array}{c}
w\\
u
\end{array}\right)\nonumber \\
 & =\left(\begin{array}{c}
\pi_{V}f-\pi_{V}M_{1,12}y-\partial_{0}\pi_{V}K(\partial_{0}^{-1})(\pi_{H_{0}(|G|)}^{\ast}x+\pi_{H_{0}(|\interior D|)}^{\ast}\zeta)-Cx\\
\pi_{Y}f-M_{1,22}y-M_{1,20}x-M_{1,21}\pi_{H_{0}(|\interior D|)}^{\ast}\zeta
\end{array}\right)\label{eq:observation_eq}
\end{align}
If the material law $M$ satisfies the solvability condition (\ref{eq:pos_def}),
then the operator on the left hand side of (\ref{eq:control_eq})
is boundedly invertible and thus, we can express $w$ in terms of
$x,\zeta,f$ and $u.$ Plugging this representation into (\ref{eq:bd_cd})
we obtain the boundary control equation. Analogously, by assuming
that the operator on the left hand side of Equation (\ref{eq:observation_eq})
is boundedly invertible, we can express $w$ in terms of $x,\zeta,f$
and $y$ and hence, Equation (\ref{eq:bd_cd}) yields the boundary
observation equation.%

\end{rem}

\section{A Boundary Control Problem in Visco-elasticity\label{sec:Applications}}

In this section we apply our results to a boundary control problem
for the equations of visco-elasticity. For this purpose we introduce
the required differential operators. Throughout let $\Omega\subseteq\mathbb{R}^{n}$
be an open subset of $\mathbb{R}^{n}$, $n\in\mathbb{N}$, $n\geq1$.
\begin{defn}
We denote by $L_{2}(\Omega)^{n\times n}$ the Hilbert space of $n\times n$-matrices
with entries in $L_{2}(\Omega)$ endowed with the inner product 
\[
\langle\Phi|\Psi\rangle_{L_{2}(\Omega)^{n\times n}}\coloneqq\intop_{\Omega}\trace\left(\Phi(x)^{\ast}\Psi(x)\right)\mbox{ d}x\quad\left(\Phi,\Psi\in L_{2}(\Omega)^{n\times n}\right).
\]
Moreover let $H_{\mathrm{sym}}(\Omega)\subseteq L_{2}(\Omega)^{n\times n}$
denote the closed subspace of symmetric $n\times n$ matrices. We
define the operator $\interior\Grad$ as the closure of 
\begin{align*}
\Grad|_{C_{c}^{\infty}(\Omega)^{n}}:C_{c}^{\infty}(\Omega)^{n}\subseteq L_{2}(\Omega)^{n} & \to H_{\mathrm{sym}}(\Omega)\\
(\phi_{i})_{i\in\{1,\ldots,n\}} & \mapsto\left(\frac{1}{2}\left(\partial_{j}\phi_{i}+\partial_{i}\phi_{j}\right)\right)_{i,j\in\{1,\ldots,n\}}
\end{align*}
and $\interior\Dive$ as the closure of 
\begin{align*}
\Dive|_{C_{c,\mathrm{sym}}^{\infty}(\Omega)^{n\times n}}:C_{c,\mathrm{sym}}^{\infty}(\Omega)^{n\times n}\subseteq H_{\mathrm{sym}}(\Omega) & \to L_{2}(\Omega)^{n}\\
(\Phi_{ij})_{i,j\in\{1,\ldots,n\}} & \mapsto\left(\sum_{j=1}^{n}\partial_{j}\Phi_{ij}\right)_{i\in\{1,\ldots,n\}},
\end{align*}
where we denote by $C_{c,\mathrm{sym}}^{\infty}(\Omega)^{n\times n}$
the space of symmetric $n\times n$-matrices with entries in $C_{c}^{\infty}(\Omega).$
\\
Furthermore we extend the meaning of $\interior\grad$ by defining
it as the closure of 
\begin{align*}
\grad|_{C_{c}^{\infty}(\Omega)^{n}}:C_{c}^{\infty}(\Omega)^{n}\subseteq L_{2}(\Omega)^{n} & \to L_{2}(\Omega)^{n\times n}\\
(\psi_{i})_{i\in\{1,\ldots,n\}} & \mapsto\left(\partial_{j}\psi_{i}\right)_{i,j\in\{1,\ldots,n\}}
\end{align*}
and similarly $\interior\dive$ as the closure of 
\begin{align*}
\dive|_{C_{c}^{\infty}(\Omega)^{n\times n}}:C_{c}^{\infty}(\Omega)^{n\times n}\subseteq L_{2}(\Omega)^{n\times n} & \to L_{2}(\Omega)^{n}\\
(\Psi_{ij})_{i,j\in\{1,\ldots,n\}} & \mapsto\left(\sum_{j=1}^{n}\partial_{j}\Psi_{ij}\right)_{i\in\{1,\ldots,n\}}
\end{align*}
leaving it to the context to determine if the scalar or the matrix
version of these operations are meant.
\end{defn}
An easy computation shows that $\interior\Grad$ and $\interior\Dive$
are formally skew-adjoint, likewise the extended operations $\interior\grad$
and $\interior\dive$ are formally skew-adjoint. Following the notation
introduced in Section 4 we define $\Grad\coloneqq-(\interior\Dive)^{\ast}$,
$\Dive\coloneqq-(\interior\Grad)^{\ast},$ $\grad\coloneqq-(\interior\dive)^{\ast}$
and $\dive\coloneqq-(\interior\grad)^{\ast}.$ The equations of visco-elasticity
are given by 
\begin{align}
\partial_{0}^{2}\rho x(t)-\Dive T(t) & =f(t),\label{eq:dispalcement}\\
T(t) & =M\Grad x(t)-\intop_{-\infty}^{t}g(t-s)\Grad x(s)\mbox{ d}s.\label{eq:stress}
\end{align}

Here $x\in H_{\nu,0}(\mathbb{R};L_{2}(\Omega)^{n})$ and $T\in H_{\nu,0}\left(\mathbb{R};H_{\mathrm{sym}}(\Omega)\right)$
are the unknowns, denoting the displacement field and the stress tensor,
respectively. The density function $\rho\in L_{\infty}(\Omega)$ is
assumed to be real-valued and uniformly strictly positive, i.e. $\rho\geq c_{1}>0.$
The tensor $M\in L\left(H_{\mathrm{sym}}(\Omega)\right)$, linking
the stress and the strain tensor, is assumed to be selfadjoint and
satisfies $M\geq c_{2}>0$. The function $g:\mathbb{R}_{\geq0}\to\mathbb{C}$
is assumed to be absolutely continuous%
\footnote{The equations can also be studied in a more general setting, for instance
$g$ can attain values in $L(H_{\mathrm{sym}}(\Omega))$ (cf. \cite{Trostorff2012_integro}).%
}, i.e. $g(t)=\intop_{0}^{t}h(s)\mbox{ d}s+g_{0}$ for some $h\in L_{1}(\mathbb{R}_{\geq0})$
and $g_{0}\in\mathbb{C}$. An easy computation shows that the convolution
operator $g\ast:H_{\nu,0}(\mathbb{R};H_{\mathrm{sym}}(\Omega))\to H_{\nu,0}(\mathbb{R};H_{\mathrm{sym}}(\Omega))$
is continuous for each $\nu>0$ with $\|g\ast\|_{L(H_{\nu,0}(\mathbb{R};H_{\mathrm{sym}}(\Omega))}\leq\frac{1}{\nu}\left(|h|_{L_{1}(\mathbb{R}_{\geq0})}+|g_{0}|\right).$
Thus, for $\nu>0$ large enough, the operator $(1-M^{-1}g\ast)$ is
invertible, and hence we may write (\ref{eq:stress}) as 
\begin{equation}
(M-g\ast)^{-1}T=(1-M^{-1}g\ast)^{-1}M^{-1}T=\Grad x.\label{eq:stress_new}
\end{equation}

The boundary control and observation equations are given by 
\begin{align}
TN & =\partial_{0}x+\sqrt{2}u,\nonumber \\
TN & =\sqrt{2}y-\partial_{0}x,\label{eq:control}
\end{align}
on $\partial\Omega$, where we denote by $N$ the outer unit normal
vector field.
\begin{rem}
Since we have to compare Neumann-type traces and Dirichlet-type traces
we have to determine a suitable control and observation space. For
doing so, let us assume for the moment that $\partial\Omega$ is smooth.
We consider the space $L_{2}(\partial\Omega)^{n}.$ We assume that
the outer unit normal vector field $N$ can be extended to $\Omega$
such that $N\in L_{\infty}(\Omega;\mathbb{R})^{n}$ and $\dive N\in L_{\infty}(\Omega).$
For $f,g\in\mathrm{BD}(\grad)$ we formally compute using the divergence
theorem%
\footnote{Note that due to the assumptions on the vector field $N$, the matrix-valued
function $(f_{i}N_{k})_{i,k\in\{1,\ldots,n\}}$ lies in $D(\dive)$
for each $f\in D(\grad).$%
} 
\begin{align*}
\intop_{\partial\Omega}f\cdot g\mbox{ d}S & =\intop_{\partial\Omega}\left(f\cdot g\right)\left(N\cdot N\right)\mbox{ d}S\\
 & =\frac{1}{2}\intop_{\partial\Omega}\left(\left(N_{k}f_{i}\right){}_{k,i}g^{*}\right)\cdot N\mbox{ d}S+\frac{1}{2}\intop_{\partial\Omega}\left(\left(N_{k}g_{i}^{*}\right)_{k,i}f\right)\cdot N\mbox{ d}S\\
 & =\frac{1}{2}\intop_{\Omega}\dive\left(\left(N_{k}f_{i}^{*}\right){}_{k,i}g\right)+\frac{1}{2}\intop_{\Omega}\dive\left(\left(N_{k}g_{i}\right)_{k,i}f^{*}\right)\\
 & =\frac{1}{2}\left(\left\langle \left.\dive\left(f_{i}N_{k}\right)_{i,k}\right|g\right\rangle _{L_{2}(\Omega)^{n}}+\left\langle \left.\left(f_{i}N_{k}\right){}_{i,k}\right|\grad g\right\rangle _{L_{2}(\Omega)^{n\times n}}\right)\\
 & \quad+\frac{1}{2}\left(\left\langle f\left|\dive\left(g_{i}N_{k}\right)_{i,k}\right.\right\rangle _{L_{2}(\Omega)^{n}}+\left\langle \grad f\left|\left(g_{i}N_{k}\right)_{i,k}\right.\right\rangle _{L_{2}(\Omega)^{n\times n}}\right)\\
 & =\frac{1}{2}\left(\left\langle \left.\pi_{\mathrm{BD}(\dive)}\left(f_{i}N_{k}\right)_{i,k}\right|\stackrel{\bullet}{\grad}g\right\rangle _{\mathrm{BD}(\dive)}+\left\langle \stackrel{\bullet}{\grad}f\left|\pi_{\mathrm{BD}(\dive)}\left(g_{i}N_{k}\right)_{i,k}\right.\right\rangle _{\mathrm{BD}(\dive)}\right).
\end{align*}
This leads to the following choice for the control space $U.$\end{rem}
\begin{defn}
Let $N\in L_{\infty}(\Omega;\mathbb{R})^{n}$ be such that $\dive N\in L_{\infty}.$
We define the bounded linear operator $\nu:BD(\grad)\to\mathrm{BD}(\dive)$
by $\nu f\coloneqq\pi_{\mathrm{BD}(\dive)}\left(f_{i}N_{k}\right)_{i,k\in\{1,\ldots,n\}}$.
We assume that the operator $\stackrel{\bullet}{\dive}\nu+\nu^{\ast}\stackrel{\bullet}{\grad}$
is positive, i.e. for every $f\in\mathrm{BD}(\grad)\setminus\{0\}$
we have 
\[
\left\langle \left.\left(\stackrel{\bullet}{\dive}\nu+\nu^{\ast}\stackrel{\bullet}{\grad}\right)f\,\right|f\right\rangle _{\mathrm{BD}(\grad)}>0.
\]
We define the Hilbert space $U$ as the completion of $\mathrm{BD}(\grad)$
with respect to the inner product 
\begin{align*}
\langle\cdot|\cdot\rangle_{U}:\mathrm{BD}(\grad)\times\mathrm{BD}(\grad) & \to\mathbb{C}\\
(f,g) & \mapsto\frac{1}{2}\left(\left\langle \nu f\left|\stackrel{\bullet}{\grad}g\right.\right\rangle _{\mathrm{BD}(\dive)}+\left\langle \left.\stackrel{\bullet}{\grad}f\right|\nu g\right\rangle _{\mathrm{BD}(\dive)}\right).
\end{align*}
We denote the embedding $\mathrm{BD}(\grad)\hookrightarrow U$ by
$\iota.$
\end{defn}
In the following we require that Korn's inequality holds, i.e., $H_{1}(|\Grad|+\i)\stackrel{\kappa}{\hookrightarrow}H_{1}(|\grad|+\i)$
(for sufficient criteria see \cite{Ciarlet2005} and the references
therein). We consider the bounded operator $j:\mathrm{BD}(\Grad)\to U$
given by $j=\iota\circ\pi_{\mathrm{BD}(\grad)}\circ\kappa\circ\pi_{\mathrm{BD}(\Grad)}^{\ast}$
and compute 
\begin{align*}
\langle jf|g\rangle_{U} & =\langle\pi_{\mathrm{BD}(\grad)}\kappa\pi_{\mathrm{BD}(\Grad)}^{*}f|g\rangle_{U}\\
 & =\frac{1}{2}\left(\left\langle \nu\pi_{\mathrm{BD}(\grad)}\kappa\pi_{\mathrm{BD}(\Grad)}^{*}f\left|\stackrel{\bullet}{\grad}g\right.\right\rangle +\left\langle \left.\stackrel{\bullet}{\grad}\pi_{\mathrm{BD}(\grad)}\kappa\pi_{\mathrm{BD}(\Grad)}^{*}f\right|\nu g\right\rangle _{\mathrm{BD}(\dive)}\right)\\
 & =\frac{1}{2}\left(\left\langle f\left|\pi_{\mathrm{BD}(\Grad)}\kappa^{\ast}\pi_{\mathrm{BD}(\grad)}^{\ast}\nu^{\ast}\stackrel{\bullet}{\grad}g\right.\right\rangle _{\mathrm{BD}(\Grad)}\right.\\
 & \quad\left.+\left\langle f\left|\pi_{\mathrm{BD}(\Grad)}\kappa^{\ast}\pi_{\mathrm{BD}(\grad)}^{\ast}\stackrel{\bullet}{\dive}\nu g\right.\right\rangle _{\mathrm{BD}(\Grad)}\right)\\
 & =\left\langle f\left|\frac{1}{2}\pi_{\mathrm{BD}(\Grad)}\kappa^{\ast}\pi_{\mathrm{BD}(\grad)}^{\ast}\left(\nu^{\ast}\stackrel{\bullet}{\grad}+\stackrel{\bullet}{\dive}\nu\right)g\right.\right\rangle _{\mathrm{BD}(\Grad)}
\end{align*}

for each $f\in\mathrm{BD}(\Grad),\, g\in\mathrm{BD}(\grad).$ This
gives
\[
j^{*}=\frac{1}{2}\pi_{\mathrm{BD}(\Grad)}\kappa^{\ast}\pi_{\mathrm{BD}(\grad)}^{\ast}\left(\nu^{\ast}\stackrel{\bullet}{\grad}+\stackrel{\bullet}{\dive}\nu\right)
\]
and, consequently, 
\begin{align}
 & \gamma_{\Dive}\pi_{\mathrm{BD}(\Dive)}^{\ast}\stackrel{\bullet}{\Grad}j^{\ast}g\label{eq:j}\\
\notag & =\frac{1}{2}\gamma_{\Dive}\pi_{\mathrm{BD}(\Dive)}^{\ast}\stackrel{\bullet}{\Grad}\pi_{\mathrm{BD}(\Grad)}\kappa^{\ast}\pi_{\mathrm{BD}(\grad)}^{\ast}\left(\nu^{\ast}\stackrel{\bullet}{\grad}+\stackrel{\bullet}{\dive}\nu\right)g.
\end{align}

\begin{rem}
\label{rem:abstract_bd_cd}We give a possible interpretation of the
latter equality. For this purpose we compute formally using the divergence
theorem
\begin{align*}
 & \intop_{\partial\Omega}\left(\Grad j^{\ast}g\right)N\cdot f\mbox{ d}S\\
 & =\langle\gamma_{\Dive}\pi_{\mathrm{BD}(\Dive)}^{\ast}\stackrel{\bullet}{\Grad}j^{\ast}g|\pi_{\mathrm{BD}(\Grad)}^{\ast}f\rangle\\
 & =\frac{1}{2}\langle\gamma_{\Dive}\pi_{\mathrm{BD}(\Dive)}^{\ast}\stackrel{\bullet}{\Grad}\pi_{\mathrm{BD}(\Grad)}\kappa^{\ast}\pi_{\mathrm{BD}(\grad)}^{\ast}\nu^{\ast}\stackrel{\bullet}{\grad}g|\pi_{\mathrm{BD}(\Grad)}^{\ast}f\rangle\\
 & \quad+\frac{1}{2}\langle\gamma_{\Dive}\pi_{\mathrm{BD}(\Dive)}^{\ast}\stackrel{\bullet}{\Grad}\pi_{\mathrm{BD}(\Grad)}\kappa^{\ast}\pi_{\mathrm{BD}(\grad)}^{\ast}\stackrel{\bullet}{\dive}\nu g|\pi_{\mathrm{BD}(\Grad)}^{\ast}f\rangle\\
 & =\frac{1}{2}\langle\pi_{\mathrm{BD}(\Grad)}\kappa^{\ast}\pi_{\mathrm{BD}(\grad)}^{\ast}\nu^{\ast}\stackrel{\bullet}{\grad}g|f\rangle+\frac{1}{2}\langle\stackrel{\bullet}{\Grad}\pi_{\mathrm{BD}(\Grad)}\kappa^{\ast}\pi_{\mathrm{BD}(\grad)}^{\ast}\nu^{\ast}\stackrel{\bullet}{\grad}g|\stackrel{\bullet}{\Grad}f\rangle\\
 & \quad+\frac{1}{2}\langle\pi_{\mathrm{BD}(\Grad)}\kappa^{\ast}\pi_{\mathrm{BD}(\grad)}^{\ast}\stackrel{\bullet}{\dive}\nu g|f\rangle+\frac{1}{2}\langle\stackrel{\bullet}{\Grad}\pi_{\mathrm{BD}(\Grad)}\kappa^{\ast}\pi_{\mathrm{BD}(\grad)}^{\ast}\stackrel{\bullet}{\dive}\nu g|\stackrel{\bullet}{\Grad}f\rangle\\
 & =\frac{1}{2}\langle\pi_{\mathrm{BD}(\Grad)}\kappa^{\ast}\pi_{\mathrm{BD}(\grad)}^{\ast}\nu^{\ast}\stackrel{\bullet}{\grad}g|f\rangle_{\mathrm{BD}(\Grad)}+\frac{1}{2}\langle\pi_{\mathrm{BD}(\Grad)}\kappa^{\ast}\pi_{\mathrm{BD}(\grad)}^{\ast}\stackrel{\bullet}{\dive}\nu g|f\rangle_{\mathrm{BD}(\Grad)}\\
 & =\frac{1}{2}\langle\stackrel{\bullet}{\grad}g|\nu\pi_{\mathrm{BD}(\grad)}\kappa\pi_{\mathrm{BD}(\Grad)}^{\ast}f\rangle_{\mathrm{BD}(\dive)}+\frac{1}{2}\langle\nu g|\stackrel{\bullet}{\grad}\pi_{\mathrm{BD}(\grad)}\kappa\pi_{\mathrm{BD}(\Grad)}^{\ast}f\rangle_{\mathrm{BD}(\dive)}\\
 & =\langle g|\pi_{\mathrm{BD}(\grad)}\kappa\pi_{\mathrm{BD}(\Grad)}^{\ast}f\rangle_{U}\\
 & =\intop_{\partial\Omega}g\cdot\pi_{\mathrm{BD}(\grad)}\kappa\pi_{\mathrm{BD}(\Grad)}^{\ast}f\mbox{ d}S\\
 & =\intop_{\partial\Omega}g\cdot\pi_{\mathrm{BD}(\grad)}\kappa\pi_{\mathrm{BD}(\Grad)}^{\ast}f\mbox{ d}S+\intop_{\partial\Omega}g\cdot(1-\pi_{\mathrm{BD}(\grad)})\kappa\pi_{\mathrm{BD}(\Grad)}^{\ast}f\mbox{ d}S\\
 & =\intop_{\partial\Omega}g\cdot\kappa\pi_{\mathrm{BD}(\Grad)}^{\ast}f\mbox{ d}S\\
 & =\intop_{\partial\Omega}g\cdot f\mbox{ d}S
\end{align*}
for each $f\in\mathrm{BD}(\Grad),g\in\mathrm{BD}(\grad).$ Hence,
equality (\ref{eq:j}) can be seen as a generalization of
\begin{equation}
\left(\Grad j^{\ast}g\right)N=g\mbox{ on }\partial\Omega\label{eq:comparison_boundary_data}
\end{equation}
to the case of non-smooth boundaries.
\end{rem}
We now want to transform the equations (\ref{eq:dispalcement}), (\ref{eq:stress_new})
and (\ref{eq:control}) into a system of the form treated in the previous
subsections. In the terminology of Subsection 4 the operator $\Grad$
should play the role of $G$ and $\Dive$ the role of $D.$ Since
we have assumed in Theorem \ref{thm:adjoint_F} that $G$ is boundedly
invertible, we require that $\Grad[L_{2}(\Omega)^{n}]$ is closed
in $H_{\mathrm{sym}}(\Omega)$.%
\footnote{The closedness of the range $\Grad[L_{2}(\Omega)^{n}]$ holds, for
instance if $H_{1}(|\Grad|+\i)$ is compactly embedded in $L_{2}(\Omega)^{n}$
and we refer to \cite{Weck1994} for sufficient conditions on $\Omega$
yielding this compact embedding. Note that this compact embedding
yields then a Poincare-type estimate, which in turn yields the closedness
of the range. Thus, the minimal assumption is the validity of a Poincare-type
estimate.%
} Then the projection theorem yields the following orthogonal decompositions%
\footnote{Note that the closedness of the range of $\Grad$ also yields the
closedness of range of $\stackrel{\circ}{\Dive}$. Since we do not
want to give the details of the proof here, we use the closure bar
for convenience.%
} 
\begin{align*}
L_{2}(\Omega)^{n} & =[\{0\}]\Grad\oplus\overline{\stackrel{\circ}{\Dive}[H_{\mathrm{sym}}(\Omega)]},\\
H_{\mathrm{sym}}(\Omega) & =[\{0\}]\stackrel{\circ}{\Dive}\oplus\Grad[L_{2}(\Omega)^{n}].
\end{align*}

We define the orthogonal projections $\pi_{\Dive}:L_{2}(\Omega)^{n}\to\overline{\stackrel{\circ}{\Dive}[H_{\mathrm{sym}}(\Omega)]}$
and $\pi_{\Grad}:H_{\mathrm{sym}}(\Omega)\to\Grad[L_{2}(\Omega)^{n}].$
Note that due to the closed graph theorem the operator $\widetilde{\Grad}\coloneqq\pi_{\Grad}\Grad\pi_{\Dive}^{\ast}$
is boundedly invertible and so is $\left(\widetilde{\Grad}\right)^{\ast}=-\pi_{\Dive}\stackrel{\circ}{\Dive}\pi_{\Grad}^{\ast}$.
Furthermore let us denote by $\iota_{\Grad}$ the embedding $H_{1}\left(\left|\widetilde{\Grad}\right|+\i\right)\hookrightarrow H_{1}\left(\left|\Grad\right|+\i\right).$
We consider the following evolutionary problem 
\begin{align}
\left(\partial_{0}\left(\begin{array}{ccc}
\pi_{\Dive}\rho\pi_{\Dive}^{\ast} & \left(\begin{array}{cc}
0 & 0\end{array}\right) & 0\\
\left(\begin{array}{c}
0\\
0
\end{array}\right) & \left(\begin{array}{cc}
\pi_{\Grad}(M-g\ast)^{-1}\pi_{\Grad}^{\ast} & 0\\
0 & \partial_{0}^{-1}
\end{array}\right) & \left(\begin{array}{c}
0\\
0
\end{array}\right)\\
0 & \left(\begin{array}{cc}
0 & 0\end{array}\right) & 0
\end{array}\right)+\left(\begin{array}{ccc}
0 & \left(\begin{array}{cc}
0 & 0\end{array}\right) & 0\\
\left(\begin{array}{c}
0\\
0
\end{array}\right) & \left(\begin{array}{cc}
0 & 0\\
0 & 0
\end{array}\right) & \left(\begin{array}{c}
0\\
0
\end{array}\right)\\
0 & \left(\begin{array}{cc}
0 & \sqrt{2}\end{array}\right) & 1
\end{array}\right)\right.\nonumber \\
\left.+\left(\begin{array}{ccc}
0 & \left(\begin{array}{cc}
-\left(\widetilde{\Grad}\right)^{\ast} & -C^{\diamond}\end{array}\right) & 0\\
\left(\begin{array}{c}
-\widetilde{\Grad}\\
C
\end{array}\right) & \left(\begin{array}{cc}
0 & 0\\
0 & 0
\end{array}\right) & \left(\begin{array}{c}
0\\
0
\end{array}\right)\\
0 & \left(\begin{array}{cc}
0 & 0\end{array}\right) & 0
\end{array}\right)\right)\left(\begin{array}{c}
v\\
\left(\begin{array}{c}
T\\
w
\end{array}\right)\\
y
\end{array}\right)=\left(\begin{array}{c}
f\\
\left(\begin{array}{c}
0\\
0
\end{array}\right)\\
0
\end{array}\right)+\left(\begin{array}{c}
0\\
\left(\begin{array}{c}
0\\
-\sqrt{2}
\end{array}\right)\\
-1
\end{array}\right)u,\label{eq:control_elasto}
\end{align}

where $C:H_{1}\left(\left|\widetilde{\Grad}\right|+\i\right)\to U$
is given by $Cx\coloneqq j\pi_{\mathrm{BD}(\Grad)}\iota_{\Grad}x.$
The material law $K$ is given by
\[
K(z)=\left(\begin{array}{ccc}
\pi_{\Dive}\rho\pi_{\Dive}^{\ast} & 0 & 0\\
0 & \pi_{\Grad}\left(M-\sqrt{2\pi}\hat{g}(-\i z^{-1})\right)^{-1}\pi_{\Grad}^{\ast} & 0\\
0 & 0 & z
\end{array}\right)
\]

and satisfies the solvability condition (\ref{eq:pos_def}). Indeed,
using the representation $z^{-1}=\i t+\nu$ for some $\nu>\frac{1}{2r},t\in\mathbb{R}$
if $z\in B_{\mathbb{C}}(r,r)$ we estimate
\[
\Re z^{-1}\pi_{\Dive}\rho\pi_{\Dive}^{\ast}\geq\nu c_{1}
\]

and 
\begin{align*}
 & \Re z^{-1}\pi_{\Grad}\left(M-\sqrt{2\pi}\hat{g}(-\i z^{-1})\right)^{-1}\pi_{\Grad}^{\ast}\\
 & =\Re z^{-1}\pi_{\Grad}\left(\sum_{k=0}^{\infty}\left(2\pi\right)^{\frac{k}{2}}M^{-k}\hat{g}(-\i z^{-1})^{k}\right)M^{-1}\pi_{\Grad}^{\ast}\\
 & =\nu\pi_{\Grad}M^{-1}\pi_{\Grad}^{\ast}\\
 & \quad+\Re\pi_{\Grad}z^{-1}\left(\sqrt{2\pi}M^{-1}\hat{g}(-\i z^{-1})\right)\left(\sum_{k=0}^{\infty}\left(2\pi\right)^{\frac{k}{2}}M^{-k}\hat{g}(-\i z^{-1})^{k}\right)M^{-1}\pi_{\Grad}^{\ast}\\
 & =\nu\pi_{\Grad}M^{-1}\pi_{\Grad}^{\ast}\\
 & \quad+\Re\pi_{\Grad}M^{-1}\left(\sqrt{2\pi}\hat{h}(-\i z^{-1})+g_{0}\right)\left(\sum_{k=0}^{\infty}\left(2\pi\right)^{\frac{k}{2}}M^{-k}\hat{g}(-\i z^{-1})^{k}\right)M^{-1}\pi_{\Grad}^{\ast}\\
 & \geq\nu c_{2}-\frac{\|M^{-1}\|^{2}\left(|h|_{L_{1}(\mathbb{R}_{\geq0})}+|g_{0}|\right)}{1-\nu^{-1}\|M^{-1}\|\left(|h|_{L_{1}(\mathbb{R}_{\geq0})}+|g_{0}|\right)},
\end{align*}

where we have used $\hat{g}(-\i z^{-1})=z\hat{h}(-\i z^{-1})+\frac{z}{\sqrt{2\pi}}g_{0}.$
Summarizing this gives $\Re z^{-1}K(z)\geq1.$ The operator $J$ is
given by $J=\left(\begin{array}{c}
0\\
\left(\begin{array}{c}
0\\
\frac{1}{\sqrt{2}}
\end{array}\right)
\end{array}\right)$ and thus, $\|J\|=\frac{1}{\sqrt{2}}.$ Since $\Re M_{1,22}=1$ Theorem
\ref{thm:well_posedness_control} applies and thus, the control system
given by (\ref{eq:control_elasto}) is well-posed. Next, we compute
$C^{\diamond}$. For that purpose let $x\in H_{1}\left(\left|\widetilde{\Grad}\right|+\i\right)$
and $w\in U.$ Then 
\begin{align*}
\langle C^{\diamond}w|x\rangle & =\langle w|Cx\rangle_{U}\\
 & =\langle w|j\pi_{\mathrm{BD}(\Grad)}\iota_{\Grad}x\rangle_{U}\\
 & =\langle\pi_{\mathrm{BD}(\Grad)}^{\ast}j^{\ast}w|\iota_{\Grad}x\rangle_{H_{1}(|\Grad|+\i)}\\
 & =\langle\pi_{\mathrm{BD}(\Grad)}^{\ast}j^{\ast}w|\iota_{\Grad}x\rangle+\langle\Grad\pi_{\mathrm{BD}(\Grad)}^{\ast}j^{\ast}w|\Grad\iota_{\Grad}x\rangle\\
 & =\langle\pi_{\mathrm{BD}(\Grad)}^{\ast}j^{\ast}w|x\rangle+\left\langle \pi_{\Grad}\pi_{\mathrm{BD}(\Dive)}^{\ast}\stackrel{\bullet}{\Grad}j^{\ast}w\left|\widetilde{\Grad}x\right.\right\rangle \\
 & =\langle\pi_{\mathrm{BD}(\Grad)}^{\ast}j^{\ast}w|x\rangle+\left\langle \left.\left(\widetilde{\Grad}\right)^{*}\pi_{\Grad}\pi_{\mathrm{BD}(\Dive)}^{\ast}\stackrel{\bullet}{\Grad}j^{\ast}w\right|x\right\rangle 
\end{align*}
Summarizing, we get that $C^{\diamond}=\pi_{\mathrm{BD}(\Grad)}^{\ast}j^{\ast}+\left(\widetilde{\Grad}\right)^{*}\pi_{\Grad}\pi_{\mathrm{BD}(\Dive)}^{\ast}\stackrel{\bullet}{\Grad}j^{\ast}.$
According to the definition of the domain of $\left(\begin{array}{cc}
-\left(\widetilde{\Grad}\right)^{\ast} & -C^{\diamond}\end{array}\right)$ the implicit boundary condition for the system reads as 
\[
\left(\widetilde{\Grad}\right)^{*}T+C^{\diamond}w\in H_{0}\left(\left|\widetilde{\Grad}\right|+\i\right).
\]
Hence, 
\[
T+\left(\left(\widetilde{\Grad}\right)^{*}\right)^{-1}C^{\diamond}w\in H_{1}\left(\left|\left(\widetilde{\Grad}\right)^{*}\right|+\i\right)\subseteq H_{1}\left(\left|\interior\Dive\right|+\i\right)\subseteq H_{1}\left(\left|\Dive\right|+\i\right).
\]
From 
\[
\pi_{\Grad}\pi_{\mathrm{BD}(\Dive)}^{\ast}\stackrel{\bullet}{\Grad}j^{\ast}w=\pi_{\Grad}\Grad\pi_{\mathrm{BD}(\Grad)}^{\ast}j^{\ast}w\in H_{1}(|\Dive|+\i)
\]
it thus follows that $T\in H_{1}\left(|\Dive|+\i\right)$ and 
\begin{align*}
\gamma_{\Dive}T & =\gamma_{\Dive}\left(-\left(\left(\widetilde{\Grad}\right)^{*}\right)^{-1}C^{\diamond}w\right)\\
 & =\gamma_{\Dive}\left(-\left(\left(\widetilde{\Grad}\right)^{*}\right)^{-1}\pi_{\Grad}\pi_{\mathrm{BD}(\Grad)}^{\ast}j^{\ast}w-\pi_{\Grad}\pi_{\mathrm{BD}(\Dive)}^{\ast}\stackrel{\bullet}{\Grad}j^{\ast}w\right)\\
 & =-\gamma_{\Dive}\pi_{\Grad}\pi_{\mathrm{BD}(\Dive)}^{\ast}\stackrel{\bullet}{\Grad}j^{\ast}w\\
 & =-\gamma_{\Dive}\pi_{\mathrm{BD}(\Dive)}^{\ast}\stackrel{\bullet}{\Grad}j^{\ast}w,
\end{align*}
where we used that $\gamma_{\Dive}$ vanishes on the domain of $\stackrel{\circ}{\Dive}$,
which is a superset of the domain of $\left(\widetilde{\Grad}\right)^{*}$.
Using (\ref{eq:control_eq}) we get 
\[
\left(\begin{array}{cc}
1 & 0\\
\sqrt{2} & 1
\end{array}\right)\left(\begin{array}{c}
w\\
y
\end{array}\right)=\left(\begin{array}{c}
-\sqrt{2}u-Cv\\
-u
\end{array}\right)
\]

and hence 
\[
w=u-Cv=-\sqrt{2}u-j\pi_{\mathrm{BD}(\Grad)}\iota_{\Grad}v.
\]

Analogously one obtains, using (\ref{eq:observation_eq}), that
\[
\left(\begin{array}{cc}
1 & \sqrt{2}\\
\sqrt{2} & 1
\end{array}\right)\left(\begin{array}{c}
w\\
u
\end{array}\right)=\left(\begin{array}{c}
-Cv\\
-y
\end{array}\right)
\]

and thus, 
\[
w=Cv-\sqrt{2}y=j\pi_{\mathrm{BD}(\Grad)}\iota_{\Grad}v-\sqrt{2}y.
\]

Following the reasoning of Remark \ref{rem:abstract_bd_cd}, we may
interpret the resulting boundary control and observation equations
as
\begin{align*}
TN & =v-u=\partial_{0}x+\sqrt{2}u,\\
TN & =\sqrt{2}y-v=\sqrt{2}y-\partial_{0}x
\end{align*}
on $\partial\Omega.$


\begin{thebibliography}{10}
\bibitem{Ciarlet2005} P.~G. Ciarlet and P.~{jun. Ciarlet}. \newblock {Another approach to linearized elasticity and a new proof of Korn's   inequality.} \newblock {\em Math. Models Methods Appl. Sci.}, 15(2):259--271, 2005.
\bibitem{Curtain1989} R.~F. Curtain and G.~Weiss. \newblock {Well posedness of triples of operators (in the sense of linear   systems theory).} \newblock {Control and estimation of distributed parameter systems, 4th Int.   Conf., Vorau/Austria 1988, ISNM 91, 41-59}, 1989.
\bibitem{Engel1998} K.-J. Engel. \newblock {On the characterization of admissible control- and observation   operators.} \newblock {\em Syst. Control Lett.}, 34(4):225--227, 1998.
\bibitem{Jacob2004} B.~Jacob and J.~R. Partington. \newblock {Admissibility of control and observation operators for semigroups: a   survey.} \newblock {Ball, Joseph A. (ed.) et al., Current trends in operator theory and   its applications. Proceedings of the international workshop on operator   theory and its applications (IWOTA), Virginia Tech, Blacksburg, VA, USA,   August 6--9, 2002. Basel: Birkh{\"a}user. Operator Theory: Advances and   Applications 149, 199-221}, 2004.
\bibitem{Jacob_Zwart2012} B.~Jacob and H.~J. Zwart. \newblock {\em {Linear Port-Hamiltonian systems on infinite-dimensional   spaces.}} \newblock {Operator Theory: Advances and Applications 223. Basel:   Birkh{\"a}user. xii, 217~p.}, 2012.
\bibitem{Kalauch2011} A.~{Kalauch}, R.~{Picard}, S.~{Siegmund}, S.~{Trostorff}, and M.~{Waurick}. \newblock {A Hilbert Space Perspective on Ordinary Differential Equations with   Memory Term}. \newblock Technical report, TU Dresden, 2011. \newblock arXiv:1204.2924.
\bibitem{LasTrigg2000-1} I.~Lasiecka and R.~Triggiani. \newblock {\em {Control theory for partial differential equations: continuous   and approximation theories. 1: Abstract parabolic systems.}} \newblock {Encyclopedia of Mathematics and Its Applications 74. Cambridge:   Cambridge University Press. xxi, 644 p.}, 2000.
\bibitem{LasTrigg2000-2} I.~Lasiecka and R.~Triggiani. \newblock {\em {Control theory for partial differential equations: continuous   and approximation theories. 2: Abstract hyperbolic-like systems over a finite   time horizon.}} \newblock {Encyclopedia of Mathematics and Its Applications. 75. Cambridge:   Cambridge University Press. xxi, p. 645-1067}, 2000.
\bibitem{Picard} R.~Picard. \newblock {A structural observation for linear material laws in classical   mathematical physics.} \newblock {\em Math. Methods Appl. Sci.}, 32(14):1768--1803, 2009.
\bibitem{Picard_McGhee} R.~Picard and D.~McGhee. \newblock {\em {Partial differential equations. A unified Hilbert space   approach.}} \newblock {de Gruyter Expositions in Mathematics 55. Berlin: de Gruyter.   xviii}, 2011.
\bibitem{Picard2012_conservative} R.~Picard, S.~Trostorff, and M.~Waurick. \newblock {A note on a class of conservative, well-posed linear control   systems.} \newblock In {\em 8th ISAAC Congress, Session on Evolution Partial Differential   Equations}, Springer Proceedings in Mathematics \& Statistics (PROMS).   Springer. \newblock In press.
\bibitem{Picard2012_comprehensive_control} R.~Picard, S.~Trostorff, and M.~Waurick. \newblock {On a comprehensive Class of Linear Control Problems.} \newblock Technical report, TU Dresden, 2012. \newblock arXiv:1208.3140.
\bibitem{Salamon1987} D.~Salamon. \newblock {Infinite dimensional linear systems with unbounded control and   observation: A functional analytic approach.} \newblock {\em Trans. Am. Math. Soc.}, 300:383--431, 1987.
\bibitem{Salamon1989} D.~Salamon. \newblock {Realization theory in Hilbert space.} \newblock {\em Math. Syst. Theory}, 21(3):147--164, 1989.
\bibitem{Trostorff2012_integro} S.~Trostorff. \newblock {Well-posedness of Linear Integro-Differential Equations with   Operator-valued Kernels.} \newblock Technical report, TU Dresden, 2012. \newblock arXiv:1210.1728.
\bibitem{Weck1994} N.~Weck. \newblock {Local compactness for linear elasticity in irregular domains.} \newblock {\em Math. Methods Appl. Sci.}, 17(2):107--113, 1994.
\bibitem{Weiss1989} G.~Weiss. \newblock {Admissibility of unbounded control operators.} \newblock {\em SIAM J. Control Optimization}, 27(3):527--545, 1989.
\bibitem{Weiss1989_representation} G.~Weiss. \newblock {The representation of regular linear systems on Hilbert spaces.} \newblock {Control and estimation of distributed parameter systems, 4th Int.   Conf., Vorau/Austria 1988, ISNM 91, 401-416}, 1989.
\bibitem{Tucsnak_Weiss2003} G.~Weiss and M.~Tucsnak. \newblock {How to get a conservative well-posed linear system out of thin air.   Part I. Well-posedness and energy balance}. \newblock {\em ESAIM: Control, Optimisation and Calculus of Variations},   9:247--273, 2003.
\end{thebibliography}
\end{document}